\definecolor{qqqqff}{rgb}{0.,0.,1.}
\definecolor{ffqqqq}{rgb}{1.,0.,0.}
\patchcmd{\@settitle}{\uppercasenonmath\@title}{}{}{}
\patchcmd{\@setauthors}{\MakeUppercase}{}{}{}
\def\F{{\mathbb{F}}}
\def\N{{\mathbb{N}}}
\def\0{{\mathbb{O}}}
\def\Q{{\mathbb{Q}}}
\def\R{{\mathbb{R}}}
\def\Z{{\mathbb{Z}}}
\def\t{{\mathbbb{T}}}
\def\cA{{\mathcal A}}
\def\cC{{\mathcal C}}
\def\cK{{\mathcal K}}
\def\cT{{\mathcal T}}
\def\Aut{{\rm Aut \, }}
\def \Iso {\hbox{Iso}}
\def \Map {\hbox{Map}}
\def\Sym {\hbox{Sym}}
\newcommand{\norm}[1]{{\left\|{#1}\right\|}}
\newcommand{\abs}[1]{{\left|{#1}\right|}}
\newcommand{\scal}[1]{{\left\langle{#1}\right\rangle}}
\newcommand{\set}[1]{{\left\{{#1}\right\}}}
\newcommand{\actson}{\curvearrowright}
\def\fig{ \centerline{Fig. \the\count200\global\advance\count200 by 1}}
\newtheorem{thm}{Theorem}[section]
\newtheorem{cor}[thm]{Corollary}
\newtheorem{lem}[thm]{Lemma}
\newtheorem{prop}[thm]{Proposition}
\newtheorem*{thm*}{Theorem}
\newtheorem*{lem*}{Lemma}
\newtheorem*{cor*}{Corollary}
\theoremstyle{definition}
\newtheorem{defn}[thm]{Definition}
\newtheorem{ex}[thm]{Example}
\newtheorem{exs}[thm]{Examples}
\theoremstyle{remark}
\newtheorem{rem}[thm]{Remark}
\title[Amenable actions preserving a locally finite metric]
{Amenable actions preserving a locally finite metric}
\author{Claire Anantharaman-Delaroche}
\address{Institut Denis Poisson,
Universit\'e d'Orl\'eans, Universit\'e de Tours, CNRS,
\newline
\indent B. P. 6759, 45067 Orl\'eans Cedex 2, France}
\email{claire.anantharaman@univ-orleans.fr}
\subjclass[2010]{Primary  43A07; Secondary  20E06, 20E08}
\keywords{}
\begin{document}

\begin{abstract} 
The class $\cA$ of countable groups that admit a faithful, transitive, amenable -- in the sense that there is an invariant mean -- action on a set has been widely investigated in the past. In this paper, we no longer require the action to be transitive, but we ask for it to preserve a locally finite metric (and still to be faithful and amenable). The groups having such actions are those that embed into a totally disconnected amenable locally compact group. Then we focus on the subclass $\cA_{1}$ of groups for which the actions are moreover transitive. This class is strictly contained into $\cA$ and includes non-amenable groups. An important particular case of actions preserving a locally finite metric is given by actions by automorphisms of locally finite connected graphs. We take this opportunity, in our partly expository paper, to review some nice results about amenable actions in this setting.

\end{abstract}
\maketitle

\section{Introduction}
  
Let $G$ be a  group acting (to the left) on a  set $X$.\footnote{In this paper, $G$ and $X$ will always be endowed with the discrete topology.} It is an old and important problem to understand how some properties of $G$ are reflected into properties of the action and to investigate the converse, namely the influence on $G$ of some properties of the action.

The origin of the problem that interests us here goes back to the very beginning of the previous century, in the desire to assimilate the notions of measure and integral. Since it had been shown by Vitali, as early as 1905, that the Lebesgue measure on $\R^n$ could not be extended as a {\it countably additive} measure on all the subsets of $\R^n$, still invariant under the action  by isometries of $\R^n$, a natural question was to see whether such an extension exists when only requiring its {\it finite additivity}.  This question was answered negatively by Hausdorff (1914) when $n\geq 3$ and positively by Banach  (1923) for $n=1, 2$. The crucial contribution of Hausdorff, improved later by Banach-Tarski (1924) is that, due to the fact that the group of rotations $SO(3,\R)$ contains the free group $\F_2$ of rank $2$ as subgroup, there does not exist a finitely additive measure on the unit sphere $\mathbb S^2$ of $\R^3$,  invariant under the $SO(3,\R)$-action. 

In order to avoid the confusion between countably and finitely additive measures, a finitely additive measure will rather be called a {\it mean} in this article (see Section \ref{sect:glimpse}).

In its influential paper \cite{vN29}, von Neumann discovered that the actual reason for the existence of a mean on $X =\R$ and on $X=\R^2$, invariant under the action of their isometry groups, was what is now called the amenability of these groups. He also showed that every group containing $\F_2$ as a subgroup is non-amenable.

In this text, an action $G\actson X$ is said to be amenable if there exists a $G$-invariant mean on $X$ (see Section \ref{subsect:amen}). Since there is sometimes a confusion between several notions of amenable action,  we give a short account on the subject in Section \ref{sect:glimpse} for the reader's convenience. 

 It is easily seen that any action of an amenable group is amenable and that, conversely, if a {\it free} action of $G$ is amenable then $G$ has to be amenable. In \cite[Problem, page 18]{Green} Greenleaf asked whether this converse holds for any ``reasonable'' action of $G$. Of course this action should be faithful, otherwise one would be investigating the action of a quotient of $G$. Secondly, the action should be transitive, since it is easy to construct  faithful amenable actions of any group $G$, for instance the action on $X= G\sqcup \{x_0\}$ where $x_0$ is a fixed point, the action on $G$ being by left translations. Therefore the natural question is the following one:

\begin{itemize}
\item[]{\it Let $\cA$ be the class of all   countable\footnote{Throughout this paper, countable means {\it infinite} countable.} groups which admit a faithful, transitive, amenable action. Is $\cA$ reduced to the  class of amenable groups?}
\end{itemize}

This question was answered negatively by van Douwen  in a posthumous  paper \cite{vanDou} published in 1990, which shows that the free group $\F_2$ with two generators is in $\cA$.  Even more,  the action of $\F_2$ can be constructed to be almost free in the sense that every element of $\F_2\setminus \set{e}$ has at most a finite number of fixed points. 

A detailed study of the class $\cA$ was carried out by Glasner and Monod in \cite{GM}. An obvious obstruction for a countable group to be in $\cA$ is the Kazhdan property (T). Among the results contained in \cite{GM}, one finds a complete characterization of the free products that are in $\cA$. In particular, the class $\cA$ is stable under free products. Other methods of constructing groups in the class $\cA$ have been  presented  by Grigorchuk and Nekrashevych \cite{GN}.  Later on, much more examples of groups in the class $\cA$ were given \cite{Fima, Moon10, Moon11, Moon11bis}.

We observe that non-necessarily transitive  actions, but with finite stabilizers and more generally with amenable stabilizers, are amenable if and only if the acting group is amenable (Lemma \ref{lem:useful}). There is a   class of group actions $G\actson X$, more general than actions with finite stabilizers,  which shows certain rigidity features and seems of interest, namely the actions such that the stabilizers of every point have  finite orbits. We call them {\it f.s.o.} ({\it finite stabilizers orbits}) {\it actions}.  Whenever an action is f.s.o., the closure $G'$ of the range of $G$ in the space $\Map(X)$ of maps from $X$ to $X$, equipped with the topology of pointwise convergence, is a locally compact group (of course Hausdorff and totally disconnected), acting properly on $X$  (Theorem \ref{thm:exist_proper}). We call $G'$ the {\it Schlichting group} associated with the action. F.s.o. actions on a countable set are exactly the same as the actions preserving  a locally finite metric \cite[Proposition 3.3]{AD13}. 

  The  stabilizers of a  f.s.o.~action  $G\actson X$ form a family of almost normal subgroups of $G$, which are mutually commensurate (Definition \ref{defn:com}). For such an action we show that the following conditions are equivalent (Proposition \ref{co_sco3}):
 {\it  \begin{itemize}
 \item[(i)] the global action $G\actson X$ is amenable;
 \item[(ii)] there exists a $G$-orbit $Y$ such that the action $G\actson Y$ is amenable;
 \item[(iii)] the  action of $G$ on each of its orbits is amenable;
 \item[(iv)] the Schlichting group $G'$ is amenable.
 \end{itemize}}
 
 This is a rigidity property of f.s.o. actions. Indeed,  there exist amenable (not f.s.o.) actions without any orbit on which the group acts amenably.
 
 For a transitive f.s.o. action, that is, an action of the form $G\actson G/H$ where $H$ is an almost normal subgroup of $G$, the group $G'$ was considered by Schlichting in \cite{Sch79,Sch80}. In this case, the equivalence between the  amenability of the action and the amenability of $G'$ had been established in \cite[Proposition 5.1]{Tza}.  
  
  When the stabilizers of a faithful action $G\actson X$ are finite we have $G=G'$ and, as already said, the action is amenable if and only if $G$ is amenable. When the stabilizers have a finite index in $G$, the action is of course f.s.o. and amenable, and the group $G'$ is compact. Thus the amenability of $G'$ does not imply the amenability of $G$.  It suffices to consider {\it any} residually finite group $G$ and any family $(H_i)_{i\in I}$ of finite index subgroups $H_i$ such that $\bigcap_{g\in G, i\in I} gH_ig^{-1} = \{e\}$. The obvious $G$-action on $X=\sqcup_{i\in I} G/H_i$ is faithful, amenable, f.s.o., and $G'$ is compact, whereas $G$ can be a non-abelian free group, or a Kazhdan group for instance.
 
 As a consequence of Proposition \ref{co_sco3}  we see that if $G\actson X$ is an amenable f.s.o. action, then every subgroup $H$ of $G$ acts amenably on all its $H$-orbits. This was already observed in \cite{AD13} for transitive $G$-actions. This fact is false without the f.s.o. assumption. Indeed, in \cite{MoPo, Pes} one finds examples of amenable transitive actions $G\actson G/K$ for which there exist intermediate subgroups $H$ ({\it i.e.}, $K < H < G$) such that the action $H\actson H/K$ is not amenable.

Therefore,   amenable f.s.o.~actions seem to behave more nicely and it is tempting to ask whether this property of the action implies the amenability of the group when moreover the action is faithful and transitive.  So it appears to be interesting to consider the class  $\cA_{1}\subset \cA$ of countable groups that admit faithful, transitive, amenable f.s.o.~actions (equivalently, actions preserving a locally finite metric) and to compare it to the class of amenable groups and to $\cA$. This is the main purpose of this paper.

The reminders of Section \ref{sect:glimpse} on amenable actions are followed  in Section \ref{sect:f.s.o.}
 by some prelimi\-naries on f.s.o. actions. They are related to   the different ways to embed densely a (discrete) group in a totally disconnected locally compact ({\it t.d.l.c.} in short) group, about which we go into some  details. An important particular case of f.s.o. actions are those by automorphisms of locally finite connected graphs, viewed as actions on their set of vertices. In fact, they are  the most general examples of f.s.o. actions of finitely generated groups on countable sets (Proposition \ref{prop:graph}).

 In Section \ref{sect:amenact} we give   equivalent characterizations of amenable f.s.o. actions. In particular we show that  the class $\widetilde{\cA}_{1}$ of countable   groups having faithful, amenable f.s.o. actions   is the same as the class of countable groups that are subgroups of amenable t.d.l.c. groups. Thus, there are some  obstructions for a group to be in $\widetilde{\cA}_{1}$. For instance, every Property (T) subgroup of $G \in \widetilde{\cA}_{1}$ must be residually finite. We also review several nice facts about amenable actions on locally finite connected graphs, a subject widely studied in the '90s (Theorem \ref{Woess}) and related to results in \cite{Ab77}. 
  
In section \ref{sect:transitive} we establish some features and properties of the class $\cA_{1}$ (contained both in $\cA$ and in $\widetilde{\cA}_{1}$) with a focus on faithful, transitive, amenable actions on locally finite connected graphs.

We observe in particular that the class $\cA_{1}$ is strictly smaller than $\cA$. The reason is that every countable group can be embedded as a subgroup of a group that belongs to $\cA$, as shown in \cite{GM}, while some groups cannot be embedded in a group belonging to $\cA_{1}$ ({\it e.g} non-residually finite groups with Kazhdan Property (T)).

  We also show that $\cA_{1}$ is strictly bigger than the class of amenable groups since there are groups in $\cA_{1}$ having non-abelian free groups or Property (T) groups as subgroups.  These exam\-ples are obtained as groups acting faithfully, transitively and amenably by automorphisms on the set of vertices of locally finite trees. This latter class of groups consists in ascending HNN-extensions (Propositions \ref{prop:HNN0} and \ref{HNN1}).
    
 {\em In this text all topological spaces are Hausdorff. Graphs are undirected, connected, locally finite, infinite and are  without multiple edges and loops. The notation $\mathfrak G$ will be reserved  for locally compact groups non-necessarily discrete.}

  \section{A short overview on different notions of amenable actions}  \label{sect:glimpse}
  
  A {\it mean} (or state)  on the algebras of functions considered below is a unital positive linear functional. In the particular case of a mean $m$ on $\ell^\infty(X)$, we may identify $m$ with the map  $\mu$ from the collection of subsets of $X$ to $[0,1]$, defined by  $\mu(E) = m(\chi_E)$, where $\chi_E$ is the characteristic function of $E$.  Such a map $\mu$ satisfies the following properties:
  \begin{itemize}
\item[(i)] $(${\it finite additivity}$)$ $\mu(E\cup F) = \mu(E) + \mu(F)$ when $E\cap F = \emptyset$;
\item[(ii)] $\mu(X) = 1$.
\end{itemize}
We say that $\mu$ is a {\it mean on $X$}.

\subsection{Amenability in Greenleaf's sense}\label{subsect:amen}Let $\mathfrak G$ be a locally compact group with a (jointly) continuous action $\mathfrak G \times Z \to Z$ on a locally compact space. We denote by $C_b(Z)$ the $C^*$-algebra of complex-valued bounded continuous functions on $Z$ and by $C_{b}(Z)^c$ its subalgebra of functions $f$ such that $g\mapsto _{g}\!\!f$ is norm-continuous on $\mathfrak G$, where $_gf(z) = f(g^{-1}z)$. A mean $m$ on $A=C_b(Z)$, or on $A=C_{b}(Z)^c$, is said to be {\it invariant} if $m(_gf) = m(f)$ for every $g\in \mathfrak G$ and every $f\in A$.

\begin{defn}\label{def:green}  Following Greenleaf \cite{Greenleaf}, we say that the action $\mathfrak G\actson Z$ is {\it topologically amenable} 
 if there exists an invariant mean $m$ on $C_b(Z)$. This is equivalent to the existence of an invariant mean on $C_{b}(Z)^c$ (see \cite[Theorem 3.1]{Greenleaf}).  

If $\nu$ is a  measure on $Z$ that is quasi-invariant under the action of $\mathfrak G$, then we say that the action on $(Z,\nu)$ is {\it amenable} (in Greenleaf's sense) if there exists an invariant mean on $L^\infty(Z,\nu)$. This is equivalent to the existence of an invariant mean on  $L^\infty(Z,\nu)^c$ (the algebra of functions $f\in L^\infty(Z,\nu)$ such that $g\mapsto _{g}\!\!f$ is norm-continuous). In \cite[page 344]{AL} an amenable action in this sense is called a {\it co-amenable} action.
\end{defn}

Note that  an amenable  action on $(Z,\nu)$ is  topologically amenable, but the converse does not hold in general (see \cite[Example 1]{Greenleaf}). 

Of course, when $Z$ is a discrete set and $\nu$ is the counting measure, the two notions of amenable action are the same since $C_b(Z) = \ell^\infty(Z) = \ell^\infty(Z,\nu)$,  and are equivalent to the existence of an invariant mean on $Z$,  {\it i.e.} a mean $\mu$ such that $\mu(gE) = \mu(E)$ for all $g\in G$ and $E\subset X$.

 Another important particular case concerns the action of $\mathfrak G$ on $Z =\mathfrak G/ H$, where $ H$ is a closed subgroup of $\mathfrak G$. Let $\nu$ be the (essentially unique) quasi-invariant measure on $\mathfrak G/ H$. Then here, the topological action $\mathfrak G \actson  Z$ is amenable\footnote{In this case one also says that $ H$ is {\it co-amenable} in $\mathfrak G$, or that the {\it coset space $\mathfrak G/ H$ is amenable}.} if and only if the action $\mathfrak G \actson (Z,\nu)$ is amenable \cite[Theorem 3.3]{Greenleaf}.  When $ H$ is a normal subgroup this is equivalent to the amenability of the quotient group. If this holds with $ H = \set{e}$ we say that $\mathfrak G$ is {\it amenable}.

Every action of an amenable group is amenable (in the topological sense and on $(Z,\nu)$). Let us also mention that there  are other equivalent definitions of the above notions of amenability (for details we refer to \cite{Greenleaf} and \cite{Eym}). We will only need the following one, that we describe under the assumption that $Z$ is discrete. 

\begin{prop}\label{prop:guiv} Let $\mathfrak G\actson Z$  be a continuous action of a locally compact group on a discrete set $Z$ and let $\pi$ be the unitary (Koopman) representation of $\mathfrak G$ into $\ell^2(Z)$ defined by $\pi(g)(\xi) = \,_{g}\xi$. Then $\mathfrak G \actson Z$ is amenable if and only if there exists a net $(\xi_i)$ of unit vectors in $\ell^2(Z)$ such that $\lim_i \norm{\pi(g)\xi_i - \xi_i}_2 = 0$ uniformly on compact subsets of $\mathfrak G$ (in other terms, if and only if the trivial representation of $\mathfrak G$ is weakly contained into $\pi$).
\end{prop}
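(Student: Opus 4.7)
The plan is to prove each direction separately. Direction $(\Leftarrow)$ is a direct weak-$*$ limit argument, while $(\Rightarrow)$ combines Day's convexity trick with the Powers--St\o rmer square root inequality to convert $\ell^1$ almost invariance into $\ell^2$ almost invariance.

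For $(\Leftarrow)$, given such a net $(\xi_i)$, I would associate to each $\xi_i$ the mean $m_i$ on $\ell^\infty(Z)$ defined by $m_i(\phi)=\sum_{z\in Z}\abs{\xi_i(z)}^2\phi(z)$. The pointwise bound $\bigl|\abs{a}^2-\abs{b}^2\bigr|\le (\abs{a}+\abs{b})\abs{a-b}$ combined with Cauchy--Schwarz yields $\bigl\|\abs{\pi(g)\xi_i}^2-\abs{\xi_i}^2\bigr\|_1 \le 2\norm{\pi(g)\xi_i-\xi_i}_2$, whence $\abs{m_i(\,_g\phi)-m_i(\phi)} \le 2\norm{\phi}_\infty\norm{\pi(g)\xi_i-\xi_i}_2$, which tends to $0$ uniformly on compact subsets of $\mathfrak G$. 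Any weak-$*$ cluster point $m$ of $(m_i)$ in the (weak-$*$ compact) state space of $\ell^\infty(Z)$ is therefore $\mathfrak G$-invariant.

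For $(\Rightarrow)$, starting from a $\mathfrak G$-invariant mean $m$ on $\ell^\infty(Z)$, the weak-$*$ density of $P(Z)=\{f\in\ell^1(Z):f\ge 0,\,\sum_z f(z)=1\}$ in the state space produces a net $(f_i)\subset P(Z)$ with $f_i\to m$ weak-$*$, and invariance of $m$ gives $\,_g f_i - f_i\to 0$ weakly in $\ell^1(Z)$ for every $g\in\mathfrak G$. Day's trick then applies: for any finite $F\subset\mathfrak G$, the convex subset $\{(\,_g f - f)_{g\in F} : f\in P(Z)\}\subset \ell^1(Z)^{|F|}$ contains $0$ in its weak closure, hence in its norm closure by Hahn--Banach, so for any $\varepsilon>0$ there exists $f\in P(Z)$ with $\max_{g\in F}\norm{\,_g f - f}_1<\varepsilon$. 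To upgrade $F$ to a compact $K\subset\mathfrak G$, I would exploit the fact that $Z$ is discrete and the action continuous, so every stabilizer $\mathfrak G_z$ is open: for $f\in\ell^1(Z)$ supported on a finite set $S$, the map $g\mapsto \,_g f$ is constant on cosets of the open subgroup $\bigcap_{z\in S}\mathfrak G_z$, and a finite cover of $K$ by such cosets reduces $\sup_{g\in K}\norm{\,_g f-f}_1$ to a maximum over finitely many group elements, to which Day's trick applies.

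Finally, the Powers--St\o rmer inequality $(\sqrt{a}-\sqrt{b})^2\le\abs{a-b}$ for $a,b\ge 0$, applied pointwise and combined with $\pi(g)\sqrt{f}=\sqrt{\,_g f}$, gives $\norm{\pi(g)\xi-\xi}_2^2\le\norm{\,_g f-f}_1$ for $\xi=\sqrt{f}\in\ell^2(Z)$, producing the desired net of unit vectors with almost invariance uniform on compact sets. The main obstacle, in my view, is the passage from pointwise (or finite-set) almost invariance to uniform almost invariance on compact subsets of $\mathfrak G$: this is where the discreteness of $Z$ must be used explicitly, via the openness of stabilizers, and where one must be careful to pick the support of $f$ and the finite cover of $K$ in the right order.
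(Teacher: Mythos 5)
The paper does not actually prove this proposition; it is quoted from \cite{Greenleaf} and \cite{Eym}, so your attempt has to be judged on its own. Your direction $(\Leftarrow)$ is correct, and in direction $(\Rightarrow)$ the chain ``weak-$*$ approximation of $m$ by $P(Z)$, Day's convexity trick plus Hahn--Banach/Mazur, Powers--St\o rmer'' correctly produces, for every \emph{finite} $F\subset\mathfrak G$ and $\varepsilon>0$, a unit vector $\xi$ with $\max_{g\in F}\norm{\pi(g)\xi-\xi}_2<\varepsilon$. The gap is in the passage from finite sets to compact sets, and it is not the mere bookkeeping issue you suggest: your finite set $F$ must be a set of representatives of the cosets of $V_S=\bigcap_{z\in S}\mathfrak G_z$ covering $K$, where $S$ is the support of the function $f$ that Day's trick returns -- but Day's trick returns $f$ only \emph{after} $F$ has been fixed, and gives no control on $S$, hence none on the index of $V_S$ nor on how many cosets are needed. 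There is no ``right order'' in which to make these two choices; they are genuinely interdependent. Nor can the step be waved through as a general principle: for a continuous unitary representation of a locally compact group, almost invariant vectors for all finite subsets do \emph{not} imply almost invariant vectors for compact subsets. (Take $\mathfrak G=\R$ and $\pi=\bigoplus_{n\in\Z\setminus\set{0}}\chi_n$ on $\ell^2(\Z\setminus\set{0})$ with $\chi_n(t)=e^{int}$: Dirichlet's simultaneous approximation theorem gives $(F,\varepsilon)$-invariant vectors for every finite $F$, yet $\int_0^{2\pi}\norm{\pi(t)\xi-\xi}^2\,\rd t/2\pi=2$ for every unit vector $\xi$, so no vector is $([0,2\pi],1)$-invariant.) So the openness of the stabilizers must indeed enter, but through a mechanism that breaks the circularity.

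The standard way to do this (Greenleaf, Eymard) is to move the finite set from $\ell^1(Z)$ to $L^1(\mathfrak G)$. One first replaces the invariant mean by a \emph{topologically} invariant one (this is the content of \cite[Theorem 3.1]{Greenleaf}, recorded in Definition 2.1 of the paper: pass to the subalgebra $\ell^\infty(Z)^c$ on which $g\mapsto\,_g\phi$ is norm-continuous, where invariance of a mean automatically upgrades to invariance under convolution by $h\in C_c(\mathfrak G)$, $h\geq 0$, $\int h=1$). Then Day's trick is applied to the maps $f\mapsto h'*f-f$ for $h'$ ranging over a finite $\delta$-net of the \emph{norm-compact} subset $\set{\,_gh : g\in K}$ of $L^1(\mathfrak G)$ -- a finite set chosen from $K$ and $h$ alone, before $f$ is produced -- and the estimate $\norm{(\, _gh-h')*f}_1\leq\norm{\,_gh-h'}_{L^1(\mathfrak G)}$ supplies the uniformity over $K$ with no reference to the support of $f$. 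This convolution step is the missing idea; without it (or an equivalent, such as Raikov's theorem on positive definite functions), your argument establishes the proposition only for discrete $\mathfrak G$, where compact subsets are finite and the issue does not arise.
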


Let us also recall for the reader's convenience a well-known result.

\begin{lem}\label{lem:useful}  Let $\mathfrak G\actson Z$  be a continuous action of a locally compact group on a discrete set $Z$. We assume that the action is amenable and that its stabilizers are amenable groups. Then $\mathfrak G$ is amenable.
\end{lem}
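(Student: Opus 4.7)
The plan is to derive amenability of $\mathfrak G$ from the characterization in Proposition \ref{prop:guiv} together with Fell's continuity of induction. More precisely, I will show that the left regular representation $\lambda_{\mathfrak G}$ weakly contains the trivial representation $1_{\mathfrak G}$, which is equivalent to amenability of $\mathfrak G$.

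The action preserves the orbit decomposition $Z = \bigsqcup_{i\in I} Z_i$, so the Koopman representation $\pi$ on $\ell^2(Z)$ splits as $\pi = \bigoplus_i \pi_i$, with $\pi_i$ acting on $\ell^2(Z_i)$. Since the action is continuous and $Z$ is discrete, the stabilizer $H_i$ of a chosen base point $z_i \in Z_i$ is an open (hence closed and locally compact) subgroup of $\mathfrak G$, and one has a unitary equivalence $\pi_i \cong \mathrm{Ind}_{H_i}^{\mathfrak G} 1_{H_i}$ identifying $\pi_i$ with the quasi-regular representation associated with $H_i$.

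By hypothesis each $H_i$ is amenable, so $1_{H_i} \prec \lambda_{H_i}$ (Hulanicki/Reiter). Applying Fell's theorem that induction preserves weak containment, together with the identification $\mathrm{Ind}_{H_i}^{\mathfrak G} \lambda_{H_i} \cong \lambda_{\mathfrak G}$ (induction in stages from the trivial subgroup), I obtain
\[
\pi_i \;\cong\; \mathrm{Ind}_{H_i}^{\mathfrak G} 1_{H_i} \;\prec\; \mathrm{Ind}_{H_i}^{\mathfrak G} \lambda_{H_i} \;\cong\; \lambda_{\mathfrak G}
\]
for every $i$, and hence $\pi = \bigoplus_i \pi_i \prec \lambda_{\mathfrak G}$.

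Finally, Proposition \ref{prop:guiv} applied to the amenable action $\mathfrak G \actson Z$ gives $1_{\mathfrak G} \prec \pi$. Combined with the previous step this yields $1_{\mathfrak G} \prec \lambda_{\mathfrak G}$, as required. The only non-routine ingredients are Fell's continuity of induction and the identification of the Koopman representation on an orbit with the corresponding quasi-regular representation; both are standard in the representation theory of locally compact groups, so I do not expect any serious obstacle beyond citing the appropriate references.
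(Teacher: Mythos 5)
Your proof is correct and follows essentially the same route as the paper: decompose the Koopman representation into quasi-regular representations over the orbits, use amenability of the stabilizers together with continuity of induction to get weak containment in the regular representation, and combine with Proposition \ref{prop:guiv}. The paper merely compresses the induction argument into the one-line assertion that $\lambda_{\mathfrak G/H_i}\prec\lambda_{\mathfrak G}$ when $H_i$ is amenable, which is exactly the Fell continuity plus induction-in-stages step you spell out.
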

 \vspace{-0.4cm}
 
\begin{proof} The trivial representation $\iota_{\mathfrak G}$ of $\mathfrak G$ is weakly contained into the Koopman representation $\pi$. 
We write $(\pi,\ell^2(Z))$ as a Hilbert direct sum of quasi-regular representations $(\lambda_{\mathfrak G/H_i},\ell^2(\mathfrak G/H_i))$ where the $H_i$'s are stabilizers of the action. Since these groups are amenable, the quasi-regular representations $\lambda_{\mathfrak G/H_i}$ are weakly contained into the regular representation $\lambda_{\mathfrak G}$. Therefore $\iota_{\mathfrak G}$ is weakly contained into a multiple of $\lambda_{\mathfrak G}$, hence into $\lambda_{\mathfrak G}$, and so, $\mathfrak G$ is amenable.
\end{proof}

 \subsection{Amenability in Zimmer's sense}  The original definition of Zimmer was given by a suitable  fixed point property (see \cite[Chapter 4]{Zim} for details). We give below a definition analogous to the existence of an  invariant mean. The equivalence with Zimmer's original definition was proved in \cite{Zim77} when $\mathfrak G$ is discrete and in \cite{AEG} for a general second countable locally compact group.
 
 Let $\nu$ be a  measure on $Z$, quasi-invariant under the action of $\mathfrak G$, as in Definition \ref{def:green}. We denote by $\lambda$ the left Haar measure on $\mathfrak G$. We identify canonically $L^\infty(Z,\nu)$ with a subalgebra of $L^\infty(Z\times \mathfrak G, \nu \otimes \lambda)$.
 
\begin{defn} We say that the action $\mathfrak G\actson (Z,\nu)$ is {\it amenable in Zimmer's sense}  if there exists a norm-one projection $m : L^\infty(Z\times \mathfrak G, \nu \otimes \lambda) \to L^\infty(Z,\nu)$ that is $\mathfrak G$-equivariant.
\end{defn}

Such a norm-one projection is a positive linear map. The   $\mathfrak G$-equivariance means that $m(_gF) = \,_{g}m(F)$ for $g\in \mathfrak G$ and $F \in L^\infty(Z\times \mathfrak G, \nu \otimes \lambda)$, where $\mathfrak G$ acts diagonally on $Z\times \mathfrak G$.

The group $\mathfrak G$ is amenable if and only if there exists an action  $\mathfrak G\actson (Z,\nu)$ that is amenable both in the sense of Greenleaf and in the sense of Zimmer, and then any  action of the form $\mathfrak G\actson (Z,\nu)$ is amenable in both  senses.

When $Z = \mathfrak G/H$ as in Section \ref{subsect:amen}, then $\mathfrak G \actson (\mathfrak G/H,\nu)$ is amenable in Zimmer's sense if and only if $H$ is an amenable group.

Amenability has been studied in \cite{A-DR} in various more general other settings. {\it In the rest of the text we will only consider, without further mention, amenable actions in Greenleaf's sense on discrete sets}.

 \section{F.s.o. actions and t.d.l.c. completions}\label{sect:f.s.o.}
\subsection{Isometry groups of locally finite metric spaces} Let us begin by some notation and reminders.
Let $X$ be a set. We denote by $\Map(X)$ the space of maps from $X$ to $X$ and by $\Sym(X)$ its subset of bijections.  We endow $\Map(X)$ with the topology of  pointwise convergence. Equipped with the induced topology, $\Sym(X)$ is a  topological group which acts continuously on $X$. When $X$ is countable, $\Sym(X)$ is a Polish group. However $\Sym(X)$ is not closed into $\Map(X)$ when $X$ is infinite and it is not locally compact. Given a metric $d$ on   $X$ we denote by $\Iso(X,d)$  the subgroup of isometries of $(X,d)$. Whereas $\Iso(X,d) = \Sym(X,d)$ when $d$ is such that $d(x,y) = 1$ if $x\not= y$, the group $\Iso(X,d)$ has nice properties when $d$ is a {\it locally finite metric}, meaning that its balls are finite. In fact there is a more general result (see \cite[Proposition 5.B.5]{CH}) on a metric space $(Z,d)$ where  the  closed balls are only assumed to be compact ({\it i.e.} the metric is proper).

\begin{thm} \label{thm:proper} Let $(Z,d)$ be proper metric space. Then the group $\Iso(Z,d)$ is a second countable locally compact group that acts properly on $Z$.
\end{thm}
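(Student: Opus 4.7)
The plan is to work throughout with the topology of pointwise convergence on $\Iso(Z,d)$, which on isometries coincides with the compact-open topology (pointwise convergence together with the $1$-Lipschitz property forces uniform convergence on compact sets). The three conclusions — group structure, local compactness, and properness of the action — will all follow from equicontinuity of isometries together with the assumption that closed balls in $Z$ are compact.

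First I would check that $\Iso(Z,d)$ is a closed subgroup of $\Sym(Z)$ in this topology, so that it is a topological group. If $g_n \to g$ pointwise with $g_n \in \Iso(Z,d)$, then $d(gx,gy) = \lim d(g_n x, g_n y) = d(x,y)$, so $g$ preserves the metric. The delicate point is surjectivity: fix a base point $z_0$ and, given $y \in Z$, set $x_n = g_n^{-1}(y)$. Then $d(x_n, z_0) = d(y, g_n z_0)$ is bounded because $g_n z_0 \to g z_0$, so properness of $(Z,d)$ gives a cluster point $x$ of $(x_n)$, and equicontinuity yields $g(x) = y$. Continuity of composition and inversion are then routine in this topology for $1$-Lipschitz bijections.

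Second I would establish local compactness by showing that, for $z_0 \in Z$ and $r > 0$, the set $K_r = \{g \in \Iso(Z,d) : d(gz_0, z_0) \le r\}$ is compact. By the preceding step $K_r$ is closed, and by Arzelà--Ascoli it is relatively compact in $\Map(Z)$ with the pointwise topology: its elements are equicontinuous (all $1$-Lipschitz) and, for every $z \in Z$, the orbit $\{gz : g \in K_r\}$ sits inside the compact ball $\overline{B(z_0, d(z,z_0) + r)}$. Since $K_r$ contains the basic identity neighborhood $\{g : d(gz_0, z_0) < r\}$, this yields a compact neighborhood of the identity, hence local compactness. Properness of the action is then a quick consequence: for any compact $L \subset Z$, setting $r = \sup_{x \in L} d(x, z_0) < \infty$, the set $\{g \in \Iso(Z,d) : gz_0 \in L\}$ is contained in $K_r$, which is enough to make the map $(g,z) \mapsto (gz, z)$ proper.

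Finally, second countability follows because a proper metric space is $\sigma$-compact (a countable union of closed balls $\overline{B(z_0,n)}$), hence separable; choosing a countable dense subset $D \subset Z$, pointwise convergence on $\Iso(Z,d)$ is determined by pointwise convergence on $D$, so $\Iso(Z,d)$ embeds as a subspace of the metrizable separable space $Z^D$. The main obstacle is surjectivity of the pointwise limit in the first step; once properness of $(Z,d)$ is used there, the remaining steps are a straightforward application of Arzelà--Ascoli.
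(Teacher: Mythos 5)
Your argument is correct. Note that the paper itself gives no proof of this statement: it is quoted from \cite[Proposition 5.B.5]{CH}, and your proof is essentially the standard argument given there --- closedness of $\Iso(Z,d)$ in $\Map(Z)$ under pointwise limits (with properness of $d$ used exactly where you use it, to recover surjectivity of the limit), compactness of the sets $K_r=\set{g: d(gz_0,z_0)\leq r}$ via Arzel\`a--Ascoli/Tychonoff, and separability of $Z$ for second countability. The only spot worth tightening is the properness of the action: you bound only $\set{g: gz_0\in L}$, whereas one should check that $\set{g: gK\cap L\neq \emptyset}$ is relatively compact for all compact $K,L\subset Z$; but this follows from the same triangle inequality, $d(gz_0,z_0)\leq d(z_0,x)+d(gx,z_0)$ for $x\in K$ with $gx\in L$, so the preimage of a compact subset of $Z\times Z$ under $(g,z)\mapsto (gz,z)$ is a closed subset of some $K_r\times L'$ and hence compact.
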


When a metric $d$ on $X$ is locally finite, the group $\Iso(X,d)$ is, in addition, totally disconnected, as a consequence of the following easy lemma.

\begin{lem}\label{lem:top} Let $\mathfrak G$ be a locally compact group acting\footnote{For us, group actions will implicitly be   continuous.}  properly and faithfully on a discrete topological space $X$. The stabilizers $\mathfrak G_x = \set{g\in \mathfrak G:gx=x}$ are open and compact. The set of finite intersections of these stabilizers form a basis of neighborhoods of the unit $e$. The topology of $\mathfrak G$ is the topology of pointwise convergence. In particular, $\mathfrak G$ is totally disconnected. If $X$ is countable, then $\mathfrak G$ is metrizable.
\end{lem}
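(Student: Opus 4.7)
The plan is to establish the four assertions in the order they are stated, using essentially just the definitions of proper action, continuity, faithfulness, and local compactness.

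First, I would fix $x\in X$ and handle the stabilizer. Since $X$ is discrete, $\{x\}$ is both open and closed; the orbit map $g\mapsto gx$ is continuous, so $\mathfrak G_x$ is closed, and it is open because $\{x\}$ is open. For compactness I would invoke properness: the map $(g,y)\mapsto (gy,y)$ is proper, and $\{(x,x)\}$ is compact (as $X$ is discrete), so its preimage $\mathfrak G_x\times\{x\}$ is compact, giving $\mathfrak G_x$ compact. Equivalently, $\{g:gx=x\}\subseteq\{g:g\{x\}\cap\{x\}\neq\emptyset\}$ has compact closure by the usual proper-action criterion, and it is already closed.

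Next, to show that the finite intersections $\mathfrak G_F=\bigcap_{y\in F}\mathfrak G_y$ form a basis of neighborhoods of $e$, I would take an arbitrary open neighborhood $V$ of $e$. After shrinking, I may assume $V\subseteq\mathfrak G_{x_0}$ for some $x_0$, so that $V$ is an open subset of the compact group $\mathfrak G_{x_0}$. Then $\mathfrak G_{x_0}\setminus V$ is compact. Faithfulness provides, for every $g\in\mathfrak G_{x_0}\setminus V$, a point $y_g\in X$ with $gy_g\neq y_g$; continuity of the action together with discreteness of $X$ makes $U_g=\{h:hy_g\neq y_g\}$ open. The $U_g$ cover the compact set $\mathfrak G_{x_0}\setminus V$, so finitely many $U_{g_1},\ldots,U_{g_n}$ suffice. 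Setting $F=\{x_0,y_{g_1},\ldots,y_{g_n}\}$ gives $\mathfrak G_F\subseteq V$, as any $h\in\mathfrak G_F$ lies in $\mathfrak G_{x_0}$ and avoids every $U_{g_i}$. This compactness-plus-faithfulness step is really the heart of the lemma and, I expect, the only step with any subtlety.

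The three remaining assertions then follow quickly. Since basic neighborhoods of $e$ in $\mathfrak G$ are precisely the finite intersections of stabilizers, and these are exactly the basic neighborhoods of $e$ for the pointwise-convergence topology inherited from $\Map(X)$ (using that translation by a group element is a homeomorphism, so neighborhoods of arbitrary points are handled by translating neighborhoods of $e$), the two topologies coincide. Because each $\mathfrak G_F$ is a compact open subgroup, hence clopen, $e$ has a basis of clopen neighborhoods, so the connected component of $e$ is trivial and $\mathfrak G$ is totally disconnected. Finally, if $X$ is countable, the collection of finite subsets of $X$ is countable, producing a countable neighborhood basis at $e$; a first-countable Hausdorff topological group is metrizable (by the Birkhoff--Kakutani theorem), which yields the last claim.
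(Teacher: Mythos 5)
Your proof is correct and follows the same overall route as the paper: openness and compactness of stabilizers from continuity and properness, identification of the topology with that of pointwise convergence via the neighborhood basis at $e$, total disconnectedness from the clopen basis, and Birkhoff--Kakutani for metrizability. The one place where you genuinely go beyond the printed proof is precisely the step you single out as the heart of the lemma: the paper declares the neighborhood-basis assertion ``immediate'' from properness, and then checks the coincidence of the two topologies by a net argument that only tests neighborhoods of the special form $\bigcap_{1\leq k\leq n}\mathfrak G_{x_k}$ --- that is, it implicitly relies on the basis claim it has just waved through. Your open-cover argument (cover the compact set $\mathfrak G_{x_0}\setminus V$ by the open sets $U_g=\{h: hy_g\neq y_g\}$ supplied by faithfulness, extract a finite subcover, and intersect the corresponding stabilizers) is exactly the missing justification, and it correctly locates where faithfulness is indispensable: without it the kernel of the action would sit inside every finite intersection of stabilizers and the basis claim would fail. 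So your write-up is a strictly more complete version of the paper's argument rather than a different one, and all the individual steps (properness giving compactness of $\mathfrak G_x$ as a fiber of the map $(g,y)\mapsto(gy,y)$, the translation argument identifying the topologies, the clopen basis plus Hausdorffness giving trivial components) check out.
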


\begin{proof}  Since the action is proper on a discrete set $X$, the first part of the assertion is immediate.  The topology of pointwise convergence is obviously weaker than the initial topology. It remains to prove that if $(g_i)$ is a net in $\mathfrak G$ which converges pointwise to $e$, then the convergence also holds for the initial topology. Let $V= \cap_{1\leq k\leq n} \mathfrak G_{x_k}$ be a neighborhood of $e$ for this topology. Since $\lim_{i} g_i x_k = x_k$ for $1\leq k \leq n$, there exists $i_0$ such that for $i\geq i_0$ we have $g_i x_k = x_k$ for $1\leq k \leq n$, that is $g_i\in V$.

When $X$ is countable the group $\mathfrak G$ is first countable, hence metrisable (see \cite{Bir,Kak}).
\end{proof}

\begin{ex}\label{ex:graph} Let $\Gamma = (X,E)$ be a  locally finite connected graph. Here $X$ denotes the set of vertices  and $E$ is the set of unoriented edges. Locally finite means that each vertex has only a finite number of neighbors. The geodesic  distance $d$ on $X$ is  locally finite and $\Iso(X,d)$ is the group $\Aut(\Gamma)$ of automorphisms of $\Gamma$, that is the subgroup of elements $f\in \Sym(X)$ such that $(x,y)$ is an edge if and only if $(f(x), f(y))$ is an edge. 

An action of a group on $\Gamma$ is a homomorphism from this group into $\Aut(\Gamma)$. By definition it is transitive, or amenable, or faithful, if the corresponding action on $X$ has this property.
\end{ex}

\subsection{The Schlichting completion associated with a f.s.o. action} As said in the introduction, an action $G\actson X$ is said to be f.s.o. if  the orbits of every stabilizer of the action are finite. For $x\in X$, we denote by $G_x$ its stabilizer.
 
 \begin{rem} For $x,y\in X$, the map $g\in G_x \mapsto gy$ induces a bijection from $G_x/(G_x\cap G_y)$ onto the orbit $G_x y$.  It follows that the  action $G\actson X$ is f.s.o. if and only if its stabilizers $G_x$  form a family of mutually commensurate and almost normal  subgroups of $G$,  
 two notions whose definition is recalled below.
\end{rem}

\begin{defn}\label{defn:com} Let $G$ be a  group.
\begin{itemize}
\item[(i)] Let $H, K$ be two subgroups of $G$. We say that $H$ and $K$ are {\it commensurate} if $H\cap K$ is a subgroup of finite index in $H$ and $K$.
\item[(ii)] We say that a subgroup  $H$ of $G$ is {\it almost normal}, or that $(G,H)$ is a {\it Hecke pair}, or that $H$ is {\it commensurated in} $G$, if for every $g\in G$ the subgroups $H$ and $gHg^{-1}$ are commensurate.
\end{itemize}
\end{defn}

Obvious examples of almost normal subgroups are those which are normal, or finite, or have a finite index.  A sample of non-trivial  Hecke pairs will be given in Examples \ref{exs:Hecke}. 

Let us recall that to be commensurate is an equivalence relation on the set of subgroups of $G$. Moreover, if $H$ and $K$ (resp. $H_1$ and $K_1$) are commensurate, then $H\cap H_1$ and $K \cap K_1$ are commensurate. It follows that almost normality is stable by finite intersection and under the above equivalence relation. Moreover, the properties of being finite or of finite index are preserved under this equivalence relation.

  F.s.o.  $G$-actions are closely related to homomorphisms from $G$ into totally disconnected lo\-cally compact ({\it t.d.l.c.}) groups, with dense image. Let us recall the following result.

\begin{thm}\label{thm:exist_proper}{$($\cite[Theorem 1.3]{AD13}$)$} Let $G$ be a  group acting on a  set $X$.  Let $\rho$ be the corresponding homomorphism from $G$ into $\Sym(X)$ and denote by
$G'$ the closure of $\rho(G)$ in ${\rm Map}(X)$.   Then $G'$ is a subgroup of $\Sym(X)$ acting properly on the discrete space $X$ if and only if the $G$-action is f.s.o. Moreover, in this case the group $G'$ is locally compact   and totally disconnected.
\end{thm}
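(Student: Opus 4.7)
The plan is to prove the equivalence in two directions; the forward direction is a quick observation, while the converse carries all of the content.

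For the forward direction, suppose $G'\subseteq \Sym(X)$ acts properly on the discrete space $X$. Then each stabilizer $G'_x$ is compact, and each orbit $G'_x\cdot y$ is a compact subset of the discrete space $X$, hence finite. Since $\rho(G_x)\subseteq G'_x$, the $G_x$-orbits are finite too, so the action is f.s.o.

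For the converse, assume f.s.o. The first and decisive task is to show $G'\subseteq \Sym(X)$. Let $f\in G'$ be a pointwise limit of a net $(\rho(g_i))$. Injectivity is automatic: if $f(x)=f(y)$ then eventually $g_ix=g_iy$, forcing $x=y$ since each $\rho(g_i)$ is a bijection. Surjectivity is where f.s.o. enters, and is the step I expect to be the main obstacle, since one must extract genuinely discrete information from a pointwise-convergent net. Given $y\in X$, pick any $x_0$ and let $z=f(x_0)$; past some $i_0$ we have $g_ix_0=z$, hence $g_{i_0}^{-1}g_i\in G_{x_0}$, whence
\[
g_i^{-1}y \;\in\; G_{x_0}\cdot(g_{i_0}^{-1}y).
\]
This set is finite by the f.s.o. hypothesis. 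A pigeonhole on this finite set yields a cofinal subnet along which $g_i^{-1}y$ is constant, equal to some $x\in X$; along that subnet $g_ix=y$, so $f(x)=y$. This proves $f\in\Sym(X)$.

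Next I would verify that $G'$ is a subgroup of $\Sym(X)$. Composition in $\Map(X)$ is jointly continuous for discrete $X$ (if $h_n\to h$ pointwise, then at each $x$ the sequence $h_n(x)$ is eventually equal to $h(x)$), so $G'$ is closed under composition. For inversion, let $f\in G'$ with $\rho(g_i)\to f$; for any $y\in X$ the eventual equality $\rho(g_i)(f^{-1}(y))=y$ yields $g_i^{-1}y=f^{-1}(y)$ eventually, so $\rho(g_i^{-1})\to f^{-1}$ and $f^{-1}\in G'$.

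It remains to establish properness and local compactness. One checks $G'_x=\overline{\rho(G_x)}$: if $\rho(g_i)\to f$ with $f(x)=x$ then $g_i\in G_x$ eventually. Viewing $G'_x$ inside $X^X$, each coordinate projection lands in the finite set $G_x\cdot y$, so $G'_x$ sits in the compact product $\prod_{y\in X}G_x\cdot y$ by Tychonoff and is closed there, hence compact. Every stabilizer is therefore compact and open, which simultaneously gives properness of the $G'$-action on $X$ and local compactness of $G'$ at the identity (extending to all of $G'$ by translation). Total disconnectedness is then immediate from Lemma \ref{lem:top} applied to the faithful proper action of the locally compact group $G'$ on the discrete set $X$.
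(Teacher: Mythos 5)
The paper does not prove this theorem itself; it is quoted from \cite{AD13} (Theorem 1.3 there), so there is no internal proof to compare against. Your argument is correct and complete: the forward direction correctly extracts finiteness of stabilizer orbits from compactness of $G'_x$, and in the converse the two essential points --- surjectivity of a pointwise limit via the pigeonhole on the finite set $G_{x_0}\cdot(g_{i_0}^{-1}y)$, and compactness of $G'_x$ via its embedding into the product $\prod_{y\in X}G_x\cdot y$ --- are exactly where the f.s.o.\ hypothesis must be used, and you use it there properly. This is the natural proof of the statement, and nothing further is needed.
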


 We will say that $G'$ is the {\it Schlichting group} (or the {\it Schlichting completion}) {\it associated with the action}, since this completion of $\rho(G)$ was first studied by Schlichting \cite{Sch79,Sch80} for left actions $G\actson G/H$ where $H$ is an almost normal subgroup of $G$.

 \subsection{Embeddings into   t.d.l.c. groups} 
We first gather in the next lemma, for further use, the following facts, whose straightforward proof is omitted.

\begin{lem}\label{lem:straight} Let $\mathfrak G$ be a locally compact group acting properly and faithfully on a set $X$ and let $G$ be a dense subgroup of $\mathfrak G$. 
\begin{itemize}
\item[(1)] For $x\in X$, the stabilizer  $G_x = \mathfrak G_{x}\cap G$ relative to the $G$-action is dense in $\mathfrak G_{x}$.
\item[(2)] We  have $\mathfrak Gx = Gx$ for every $x \in X$, and therefore for every $g'\in \mathfrak G$ there is a unique $gG_x$ in $G/G_x$ such that $g' = gh'$ with $h'\in \mathfrak G_{x}$. In particular $g'\mathfrak G_{x} g'^{-1} = g\mathfrak G_{x}g^{-1}$.
\item[(3)] The canonical map $gG_x \mapsto g \mathfrak G_{x}$ from $G/G_x$ into $\mathfrak G/\mathfrak G_{x}$ is a $G$-equivariant bijection. 
\end{itemize}
\end{lem}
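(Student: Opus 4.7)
The plan is to exploit the fact that $\mathfrak{G}_{x}$ is an \emph{open} subgroup of $\mathfrak{G}$, which follows from Lemma~\ref{lem:top} since the $\mathfrak{G}$-action is proper and faithful on the discrete set $X$. The density of $G$ in $\mathfrak{G}$ together with this openness is essentially the only ingredient I will use for all three items.

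For (1), I would note that any nonempty relatively open subset $U$ of $\mathfrak{G}_{x}$ has the form $V \cap \mathfrak{G}_{x}$ for some $V$ open in $\mathfrak{G}$, and since $\mathfrak{G}_{x}$ is itself open in $\mathfrak{G}$, so is $U$. Density of $G$ then gives $U \cap G \neq \emptyset$, and any element of this intersection lies in $\mathfrak{G}_{x} \cap G = G_{x}$. Hence $G_{x}$ is dense in $\mathfrak{G}_{x}$.

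For (2), the openness of $\mathfrak{G}_{x}$ implies that each left coset $g'\mathfrak{G}_{x}$ is open in $\mathfrak{G}$, so density of $G$ produces some $g \in G \cap g'\mathfrak{G}_{x}$, giving a decomposition $g' = g h'$ with $h' \in \mathfrak{G}_{x}$; in particular $g'x = gx$, which proves $\mathfrak{G}x \subset Gx$ (the reverse inclusion is trivial). For uniqueness of the coset $gG_{x}$, I would observe that if $g_{1} h_{1} = g_{2} h_{2}$ with $g_{i} \in G$ and $h_{i} \in \mathfrak{G}_{x}$, then $g_{2}^{-1} g_{1} = h_{2} h_{1}^{-1}$ lies in $G \cap \mathfrak{G}_{x} = G_{x}$, so $g_{1} G_{x} = g_{2} G_{x}$. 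The conjugation identity $g'\mathfrak{G}_{x} g'^{-1} = g\mathfrak{G}_{x} g^{-1}$ is then immediate from $g' = gh'$ and $h'\mathfrak{G}_{x}h'^{-1} = \mathfrak{G}_{x}$.

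For (3), well-definedness and injectivity of $gG_{x} \mapsto g\mathfrak{G}_{x}$ are two sides of the same observation $G \cap \mathfrak{G}_{x} = G_{x}$; surjectivity is exactly the content of (2); and $G$-equivariance is a tautology since $G$ acts on both quotients by left multiplication on representatives. There is no genuine obstacle here: the only subtle point worth flagging is that it is the \emph{openness} of the stabilizers $\mathfrak{G}_{x}$, rather than merely closedness, that makes density of $G$ transfer to density of $G_{x}$ in $\mathfrak{G}_{x}$ and to the surjectivity needed in (2) and (3).
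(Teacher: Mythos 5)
Your proof is correct; the paper omits the argument as ``straightforward,'' and what you write is precisely the intended reasoning: Lemma~\ref{lem:top} makes the stabilizers $\mathfrak G_{x}$ open, so density of $G$ meets every coset $g'\mathfrak G_{x}$ and every relatively open subset of $\mathfrak G_{x}$, from which (1)--(3) follow by the elementary coset manipulations you give. Nothing further is needed.
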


 Let $\rho : G \to \Sym(X)$ be the homomorphism associated to an action $G\actson X$. This action  is {\it faithful} ({\it i.e.}, $\rho$ is injective) if and only if $\cap_{x\in X} G_x = \set{e}$.  If moreover it is f.s.o.,  the stabilizers $G_x$ are residually finite groups since $(G_x\cap G_y)_{y\in X}$ is a family of finite index subgroups of $G_x$ with trivial intersection.

 It follows from the theorem \ref{thm:exist_proper} that every faithful and f.s.o. action $G\actson X$ gives rise to an embedding of $G$ into  the t.d.l.c. group $G'$. The cases where $G'$ is compact and where $G' = G$ are described below.
 
 \begin{exs}\label{exs:scases}  Let $\rho: G \to \Sym(X)$ be a faithful   f.s.o. action. We identify $G$ with its image.
 
 (1)  $G'$ is compact if and only if one of the orbits (and then every orbit) of $G\actson X$ is finite. Indeed, assume that the orbit of some $x_0$ is finite. By Lemma \ref{lem:straight} (3) the set $G'/G_{x_0}'$ is finite and therefore $G'$ is compact. The converse is obvious since $G'$ has finite orbits when it is compact. 
  
 (2)  $G' = G$ if and only if one stabilizer  of the action is finite. Indeed assume that $G_{x_0}$ is finite. This implies that $G_{x_0} = G'_{x_0}$ is a finite open neighborhood of $\set{e}$ in $G'$. Therefore $G'$ is discrete and so $G = G'$. The converse is obvious since $G'$ acts properly.
 \end{exs} 
 
 We describe now all the ways to  embed $G$ into a t.d.l.c. group. We will use the following definition.
 
\begin{defn}\label{Gsimple} Let $H, K$ be two subgroups of $G$. The {\it $K$-core} of $H$  is its subgroup $\cap_{k\in K} kHk^{-1}$. The $G$-core of $H$ is usually called the {\it core} of $H$.
\end{defn}

 Note that the core of $H$ is trivial if and only if $H$ does not contain any normal subgroup of $G$ except the trivial one, or equivalently if and only if the action $G\actson G/H$ is faithful.
 
 \begin{rem}\label{rem:commensurate} The above definition, as well as Definition \ref{defn:com},  makes sense for any pair of closed subgroups of a locally compact group. In particular, any two open compact subgroups are commensurate. 
 \end{rem}
 
Let $\rho: G \to \Sym(X)$ be a  f.s.o. action.  
  If $I$ is a subset of $X$ which meets each orbit in one and only one point we may write $X$ as $\sqcup_{i\in I} G/H_i$ where $H_i$ is the stabilizer of $i$. Then  $\mathbold{H} = (H_i)_{i\in I}$ is a family of mutually commensurate almost normal subgroups  of $G$. The action is faithful   if and only if $\bigcap_{g\in G,i\in I}gH_ig^{-1} = \set{e}$, or equivalently if and only if the group $\cap_{i\in I} H_i$ has a trivial core. In this case, we will also say that {\it the family  $(H_i)_{i\in I}$ has a trivial core}. 

 Conversely, given a  family $\mathbold H = (H_i)_{i\in I}$ of  mutually commensurate almost normal subgroups of  $G$ with trivial core, the $G$-action on $X=\sqcup_{i\in I} G/H_i$ is faithful and f.s.o. The corresponding Schlichting group is denoted by ${G(\mathbold{H})}$ and the embedding of $G$ into ${G(\mathbold{H})}$ is denoted by $\rho_{\!\mathbold H}$.\footnote{If the family is reduced to one subgroup $H$, we will use the notation $G(H)$ and $\rho_H$.} Using Theorem \ref{thm:exist_proper},   Lemmas \ref{lem:top} and \ref{lem:straight}, we observe that if $H'_{i}$ is the closure of $H_i$ in $G(\mathbold{H})$, then $(H'_{i})_{i\in I}$ is a family of compact open subgroups of $G(\mathbold{H})$ such that $\bigcap_{g\in G,i\in I}g H_{i}'g^{-1} =  \bigcap_{g'\in G',i\in I}g' H_{i}'g'^{-1} = \set{e}$. In other words, the family $(H_{i}')_{i\in I}$  has a trivial core. We also observe that the family of finite intersections of open compact sets of the form $g H_{i}'g^{-1}$ is a basis of neighborhoods of $e$ in ${G(\mathbold{H})}$. Finally, we observe that ${G(\mathbold{H})}$ is second countable whenever $I$ and $G$ are countable. Indeed, in this case, ${G(\mathbold{H})}$ is metrizable and separable.

 \begin{prop}\label{prop:subject} Let $\rho$ be an injective homomorphism from a group $G$ into a t.d.l.c. group $\mathfrak G$, with dense range\footnote{$\mathfrak G$ is called a {\it t.d.l.c. completion} of $G$.}.  There exist a  family $\mathbold H = (H_i)_{i\in I}$ of  mutually commensurate almost normal subgroups of  $G$ with trivial core and an isomorphism $\theta : \mathfrak G \to  G(\mathbold{H})$ such that $\theta \circ \rho = \rho_{\mathbold{H}}$. Moreover, when $\mathfrak G$ is first countable, we may choose $I$ to be countable.
 \end{prop}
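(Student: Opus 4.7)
The plan is to build the family $\mathbold{H}$ from a basis of compact open subgroups of $\mathfrak G$ and then recognize $\mathfrak G$ inside the resulting Schlichting completion. Starting from van Dantzig's theorem, I would fix a basis of neighborhoods $(U_i)_{i\in I}$ of the identity in $\mathfrak G$ consisting of compact open subgroups, taking $I$ countable when $\mathfrak G$ is first countable (which, for a t.d.l.c.~group, amounts to metrizability). Identifying $G$ with $\rho(G)$, set $H_i = G \cap U_i$ and $\mathbold{H} = (H_i)_{i\in I}$. The density of $G$ in $\mathfrak G$ is the sole tool needed to check that $\mathbold{H}$ has all the required properties: for any two compact open subgroups $U,V$ of $\mathfrak G$, representatives for the finite quotient $U/(U\cap V)$ can be chosen inside $G\cap U$ because $G\cap U$ is dense in $U$, showing that $[G\cap U : G\cap U\cap V]<\infty$. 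Applied to $(U_i,U_j)$ and to $(U_i,gU_ig^{-1})$ for $g\in G$, this gives the mutual commensurability of the $H_i$ and their almost normality in $G$. The core is trivial, since $\bigcap_{g\in G,\,i\in I} gH_ig^{-1} \subseteq \bigcap_{i\in I}H_i = G\cap\bigcap_{i\in I} U_i = \{e\}$.

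Next, I would extend the $G$-action on $X = \bigsqcup_{i\in I} G/H_i$ to a continuous $\mathfrak G$-action. Density again gives $GU_i = \mathfrak G$ for every $i$ (any $\mathfrak g\in \mathfrak G$ satisfies $\mathfrak gU_i\cap G\neq \emptyset$), so the assignment $gH_i\mapsto gU_i$ is a $G$-equivariant bijection $\phi_i : G/H_i\to \mathfrak G/U_i$. Transporting the $\mathfrak G$-action on each $\mathfrak G/U_i$ through $\phi_i$ produces a (continuous) action of $\mathfrak G$ on $X$ that restricts on $G$ to the original one, and the associated continuous homomorphism $\theta : \mathfrak G\to \Sym(X)$ satisfies $\theta\circ\rho = \rho_{\mathbold{H}}$. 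The trivial-core condition verified above implies that $\theta$ is injective.

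The final step is to identify $\theta(\mathfrak G)$ with $G(\mathbold{H}) = \overline{\rho_{\mathbold{H}}(G)}\subseteq \Map(X)$. I expect this to be the main obstacle, and would resolve it by proving $\theta(U_i) = G(\mathbold{H})_{H_i}$, where the right-hand side is the stabilizer of $H_i\in G/H_i$ inside $G(\mathbold{H})$. The inclusion $\theta(U_i)\subseteq G(\mathbold{H})_{H_i}$ is immediate, since $U_i$ fixes $H_i\in \mathfrak G/U_i$; for the reverse, $\theta(U_i)$ is compact, hence closed in $\Map(X)$, and it contains $\rho_{\mathbold{H}}(H_i)$, which by Lemma \ref{lem:straight}(1) applied to $G\subseteq G(\mathbold{H})$ is dense in $G(\mathbold{H})_{H_i}$. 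Thus $\theta$ maps the compact open subgroup $U_i$ onto the compact open subgroup $G(\mathbold{H})_{H_i}$, and by translation $\theta$ is open. Therefore $\theta(\mathfrak G)$ is an open, hence closed, subgroup of $G(\mathbold{H})$ containing the dense set $\rho_{\mathbold{H}}(G)$, so $\theta(\mathfrak G) = G(\mathbold{H})$ and $\theta$ is the desired topological isomorphism.
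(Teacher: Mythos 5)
Your proof is correct, and its first half --- producing the family via van Dantzig's theorem, setting $H_i = G\cap U_i$, and using density of $G$ to push commensurability, almost normality and the trivial-core condition from the compact open subgroups $U_i$ down to the $H_i$ --- is exactly the paper's construction. Where you genuinely diverge is in identifying $\mathfrak G$ with $G(\mathbold{H})$. The paper lets $\mathfrak G$ act on $X=\sqcup_{i}\, \mathfrak G/U_i$ and invokes Lemma \ref{lem:top} to see that the topology of $\mathfrak G$ is that of pointwise convergence, so that $\mathfrak G$ sits as a locally compact, hence closed, subgroup of the closure $G'$ of $G$ in $\Map(X)$; density of $G$ then forces $\mathfrak G = G'$ with no further work. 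You instead construct the homomorphism $\theta$ explicitly and prove it is open by showing $\theta(U_i)$ equals the stabilizer $G(\mathbold{H})_{H_i}$, combining compactness of $\theta(U_i)$ with the density of $H_i$ in that stabilizer from Lemma \ref{lem:straight}(1); all the steps (injectivity from the trivial core, openness from matching one compact open subgroup, and $\theta(\mathfrak G)$ open and dense hence everything) check out. The paper's route is shorter because the general fact that a locally compact subgroup of a Hausdorff topological group is closed absorbs the topological bookkeeping in one stroke, while yours is more hands-on and yields as a by-product the explicit identification of $U_i$ with the closure of $H_i$ in $G(\mathbold{H})$, a fact the paper records separately in the discussion preceding the proposition.
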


 \begin{proof} 
 By the van Dantzig's theorem (see {\it e.g.} \cite[Theorem II.7.7]{HR}) we know that the set of all compact open subgroups of $\mathfrak G$ is a basis of neighborhoods of the identity. We choose a  family $(H_{i}')$ of compact open subgroups (with $I$ countable when $\mathfrak G$ is first countable) such that $\bigcap_{g'\in \mathfrak G}g'(\cap_{i\in I} H_{i}')g'^{-1} = \set{e}$ and we set $X = \sqcup_{i\in I} \mathfrak G/H_{i}'$.  Then $\mathfrak G$ acts properly and faithfully on $X$. In particular, the topology on $\mathfrak G$ is the topology of pointwise convergence. Observe that the groups $g'H_{i}'g'^{-1}$ are mutually commensurate. The $\mathfrak G$-action on $X$ is f.s.o. and therefore the $G$-action is also f.s.o. Then the closure $G'$ of $G$ in ${\rm Map}(X)$ is a t.d.l.c. group which contains $\mathfrak G$, where $\mathfrak G$ is identified with its range  in $\Sym(X)$. But $\mathfrak G$ is a locally compact subgroup of $G'$, hence closed and so $\mathfrak G = G'$. The groups $G$ and $G' = \mathfrak G$ have the same orbits. Therefore $X =  \sqcup_{i\in I} G/H_i$ where $H_i = H_{i}' \cap G$ and $\bigcap_{g\in G}g(\cap_{i\in I} H_i)g^{-1} = \set{e}$. \end{proof}

\begin{ex}\label{ex}  Let $\mathbold{H}$ be a  family of  finite index subgroups of $G$ with trivial core. Then $(\rho_{\mathbold H}, G(\mathbold H))$ is a  compact, totally disconnected completion of $G$, as already said in Example \ref{exs:scases} (1).  Moreover, every such completion of $G$ is obtained in this way.

If $G$ is residually finite and if $\mathbold{H} =(H_i)_{i\in I}$ is a directed family of finite index normal subgroups such that $\cap_{i\in I} H_i = \set{e}$ then $G'= G(\mathbold{H})$ is isomorphic to the profinite completion of $G$ with respect to $(H_i)_{i\in I}$.

 For instance for $G = \Z$,  if we take the family of all proper subgroups of $\Z$ then $G'$ is the group  $\widehat{\Z}$ of $\Z$ of profinite integers. If we take the family $(p^k \Z)_{k\geq 1}$ for $p$ prime, we get the group $\Z_p$ of $p$-adic integers, which is a quotient of $\widehat{\Z} \simeq \prod_p \Z_p$.
\end{ex}

\begin{exs}\label{exs:Hecke} Many interesting  Hecke pairs $(G,H)$ and their completions are found in the literature. Let us give a few examples. Below, $p$ is a prime number and $H'$ is the closure of $H$ in $G'$.
  
$\bullet$ $H = SL_n(\Z)$, $G = SL_n(\Z[1/p])$. Then $G' = SL_n(\Q_p)$ and  $H'=SL_n(\Z_p)$ (see \cite{SW13});

$\bullet$ $H = SL_n(\Z)$, $G= SL_n(\Q)$. Then $G' = SL_n(\cA_f)$ and $H' = SL_n(\widehat{\Z})$, where $\cA_f$ is the ring of finite ad\`eles (see \cite{An});

$\bullet$ $H = \Z\rtimes 1$, $G = \Q\rtimes \Q^*_{+}$. Then  $G' = \cA_f \rtimes  \Q^*_{+}$ and $H' = \widehat{\Z}\rtimes 1$ (see \cite{BC});

$\bullet$ $H = \scal{a}$, $G = BS(m,n) = \scal{a,t| t^{-1} a^m t = a^n}$ the Baumslag-Solitar group with parameters $m,n$. This group $G$ acts faithfully and transitively on its Bass-Serre tree $T$. Its Schlichting completion is the closure $G'$ of $G$ into the group $\Aut(T)$ of automorphisms of $T$ equipped with the topology of pointwise convergence.
\end{exs}

We observe in passing that there exist groups having only finite proper subgroups, called quasi-finite groups (see \cite{Ols}). Such groups cannot be embedded into t.d.l.c. groups, except themselves.

For some other groups, all almost normal subgroups are either finite of have a finite index. This is for instance the case of $SL_n(\Z)$ with $n\geq 3$ (see \cite{SW13}). For such groups , the only possible faithful t.d.l.c. completions, except themselves, are compact.

 \subsection{Transitive t.d.l.c. completions} A  t.d.l.c. completion $(\rho,\mathfrak G)$ of $G$ will be called {\it transitive} if it is of the form $(\rho_H,G(H))$ with respect to an  almost normal subgroup $H$ of $G$ with trivial core.

\begin{prop}\label{prop:eq_trans} Let $\rho$ be an  injective  homomorphism from $G$ into a locally compact  group $\mathfrak G$, with dense image. The following conditions are equivalent:
\begin{itemize}
\item[(i)] $(\rho, \mathfrak G)$ is a  transitive t.d.l.c. completion of $G$;
\item[(ii)] there exists an open compact subgroup $H'$ of $\mathfrak G$ with trivial $G$-core.
\end{itemize}
\end{prop}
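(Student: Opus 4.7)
The two directions are of very different flavor: (i)$\Rightarrow$(ii) is essentially a recollection of what was said about the Schlichting completion just before Proposition~\ref{prop:subject}, while (ii)$\Rightarrow$(i) is an application of that proposition with a carefully chosen single compact open subgroup.

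For (i)$\Rightarrow$(ii), I would assume $(\rho,\mathfrak G)=(\rho_H,G(H))$ for some almost normal subgroup $H\subset G$ with trivial core, and take $H'$ to be the closure of $H$ in $G(H)$. As recalled just before Proposition~\ref{prop:subject}, $H'$ is a compact open subgroup of $G(H)$ and $\bigcap_{g\in G}gH'g^{-1}=\{e\}$; this is precisely the statement that the $G$-core of $H'$ is trivial.

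For (ii)$\Rightarrow$(i), suppose $H'\subset\mathfrak G$ is a compact open subgroup whose $G$-core is trivial. I first need to check that $\mathfrak G$ is \emph{totally disconnected}. Any open subgroup of a topological group is also closed, so $H'$ is open-closed and therefore contains the connected component $\mathfrak G^0$ of $e$. Since $\mathfrak G^0$ is normal, we get $\mathfrak G^0\subset \bigcap_{g\in G}gH'g^{-1}=\{e\}$, and hence $\mathfrak G$ is t.d.l.c. Next, consider the (continuous, transitive, proper) action $\mathfrak G\actson X=\mathfrak G/H'$. By Lemma~\ref{lem:straight}(2) applied to the dense subgroup $G$ we have $Gx=\mathfrak Gx=X$ for $x=H'$, so the $G$-action on $X$ is transitive; its stabilizer at $x$ is $H:=G\cap H'$. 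Since the $\mathfrak G$-action is proper, so is the $G$-action, whence $G\actson X$ is f.s.o. and $H$ is almost normal in $G$. A direct computation gives
\[
\bigcap_{g\in G}gHg^{-1}=\bigcap_{g\in G}\bigl(G\cap gH'g^{-1}\bigr)=G\cap \bigcap_{g\in G}gH'g^{-1}=\{e\},
\]
so $H$ has trivial core in $G$.

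Finally I would apply Proposition~\ref{prop:subject} to $\rho:G\hookrightarrow\mathfrak G$ using the \emph{single} compact open subgroup $H'$ (i.e.\ taking the index set $I$ reduced to one element), which is legitimate because the $G$-core of $H'$ alone is already trivial. The proposition produces a topological isomorphism $\theta:\mathfrak G\to G(\mathbold H)=G(H)$ with $\theta\circ\rho=\rho_H$, i.e.\ $(\rho,\mathfrak G)$ is the transitive t.d.l.c.\ completion associated with $H$.

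The only delicate point is making sure that Proposition~\ref{prop:subject} really can be invoked with a single $H'$; the key ingredient here is the assumption that the $G$-core of $H'$ is trivial, which replaces the general requirement $\bigcap_{g'\in \mathfrak G}g'(\cap_i H_i')g'^{-1}=\{e\}$ used in its proof. Everything else reduces to the basic dictionary established in Lemma~\ref{lem:straight} between the $G$- and $\mathfrak G$-actions on $\mathfrak G/H'$.
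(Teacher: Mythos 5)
Your proof is correct and follows essentially the same route as the paper's: for (i)$\Rightarrow$(ii) take $H'$ to be the closure of $H$ in $G(H)$, and for (ii)$\Rightarrow$(i) set $H=G\cap H'$ and identify $(\rho,\mathfrak G)$ with $(\rho_H,G(H))$ by running the argument of Proposition~\ref{prop:subject} with the single subgroup $H'$. You simply spell out details the paper leaves implicit (notably that a compact open subgroup with trivial $G$-core forces $\mathfrak G$ to be totally disconnected, and the dictionary from Lemma~\ref{lem:straight}).
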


\begin{proof}   Let $H$ be an almost normal subgroup  of $G$ with trivial $G$-core such that $\mathfrak G = G(H)$ is the Schlichting group corresponding to the action $G\actson G/H$. Then the closure $H'$ of $H$ in $\mathfrak G$ satisfies the property stated in (ii). Conversely,   starting from a subgroup $H'$ as in (ii), the action $\mathfrak G\actson \mathfrak G/H'$ is faithful and proper. Setting $H = H' \cap G$  we get $(\rho,\mathfrak G) = (\rho_H,G(H))$.
\end{proof}

\begin{cor}\label{cor:transit} Let $\mathfrak G$ be a locally compact group acting properly and faithfully on a set $X$ with a finite number of orbits\footnote{then we say that the action is {\it almost transitive}.}.
Let $G$ be a dense subgroup of $\mathfrak G$ and denote by $\rho:G\to \mathfrak G$ the inclusion. Then $(\rho,\mathfrak G)$ is a   transitive t.d.l.c. completion of $G$.
\end{cor}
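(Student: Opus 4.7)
The plan is to apply Proposition \ref{prop:eq_trans}, so the goal reduces to exhibiting an open compact subgroup $H'$ of $\mathfrak G$ whose $G$-core is trivial (and noting that $\mathfrak G$ is totally disconnected).

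First I would invoke Lemma \ref{lem:top}: since $\mathfrak G$ acts properly and faithfully on the discrete set $X$, it is totally disconnected and every stabilizer $\mathfrak G_x$ is open and compact. Let $x_1,\dots,x_n \in X$ be representatives of the finitely many $\mathfrak G$-orbits, and set
\[
H' \defequal \bigcap_{i=1}^{n} \mathfrak G_{x_i}.
\]
This is a finite intersection of open compact subgroups, hence itself open and compact in $\mathfrak G$.

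Next I would compute the $G$-core of $H'$. For any $g \in G$ and any $i$, one has $g \mathfrak G_{x_i} g^{-1} = \mathfrak G_{gx_i}$, so
\[
\bigcap_{g\in G} g H' g^{-1} \;=\; \bigcap_{g\in G}\bigcap_{i=1}^{n} \mathfrak G_{gx_i} \;=\; \bigcap_{y \in \bigcup_{i} G x_i} \mathfrak G_{y}.
\]
By Lemma \ref{lem:straight}(2), density of $G$ in $\mathfrak G$ gives $G x_i = \mathfrak G x_i$ for each $i$, so $\bigcup_{i} G x_i = X$. Therefore the $G$-core of $H'$ equals $\bigcap_{y\in X}\mathfrak G_y$, which is trivial by faithfulness of the $\mathfrak G$-action.

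Thus $H'$ satisfies condition (ii) of Proposition \ref{prop:eq_trans}, and the proposition yields that $(\rho,\mathfrak G)$ is a transitive t.d.l.c. completion of $G$. The one point requiring any care is the identification of the $G$-orbits with the $\mathfrak G$-orbits, but this is immediate from Lemma \ref{lem:straight}(2); everything else is a routine bookkeeping of open compact stabilizers, so no serious obstacle is expected.
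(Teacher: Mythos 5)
Your proof is correct and follows essentially the same route as the paper: both take $H'=\bigcap_{i=1}^n \mathfrak G_{x_i}$, use Lemma \ref{lem:straight} to identify $G$-conjugates/orbits with $\mathfrak G$-conjugates/orbits, deduce triviality of the $G$-core from faithfulness, and conclude via Proposition \ref{prop:eq_trans}. The only cosmetic difference is that you phrase the core computation as an intersection of stabilizers over all of $X$, while the paper phrases it via conjugates, but these are the same argument.
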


\begin{proof} We assume that $X = \sqcup_{1\leq i\leq n} \mathfrak Gx_i$. We set $H' = \cap_{1\leq i\leq n} \mathfrak G_{x_i}$. Then, $H'$ is a compact open subgroup of $\mathfrak G$. Moreover, since the action is faithful, we have $\cap_{g'\in \mathfrak G, i = 1,\dots,n} g'\mathfrak G_{x_i} g'^{-1} = \set{e}$. But each $g'\mathfrak G_{x_i} g'^{-1}$ is of the form $g\mathfrak G_{x_i} g^{-1}$ by Lemma \ref{lem:straight}. Therefore $H'$ has a trivial $G$-core and we apply the previous proposition in order to conclude.
\end{proof}

\subsection{F.s.o. actions under separability assumptions}
Let $G\actson X$ be an action of a group $G$ on a set $X$.  We say that {\it a metric $d$ on $X$ is $G$-invariant, or that the action preserves the metric,} if $d(gx,gy) = d(x,y)$ for every $g\in G$ and $x,y\in X$. One also says that $G$ {\it acts by isometries on} $(X,d)$.

 Observe that a locally finite metric space is countable. The following converse of the  theorem \ref{thm:proper} is a particular case  of \cite[Theorem 4.2]{A-M-N}.

\begin{thm}\label{AMN} Let $\mathfrak G$ be a locally compact group acting properly on a countable set $X$. Then, there is a $\mathfrak G$-invariant locally finite metric on $X$.
\end{thm}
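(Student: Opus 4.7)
The plan is to equip $X$ with a $\mathfrak G$-invariant weighted graph structure and take the shortest-path distance as the metric.

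Observe first that, by Lemma~\ref{lem:top}, properness and discreteness of $X$ force every stabilizer $\mathfrak G_x$ to be compact open; hence every $\mathfrak G_x$-orbit in $X$ is finite, being the continuous image of a compact group into a discrete space. Since $X$ is countable, $X \times X$ is countable, so the diagonal $\mathfrak G$-action on $X \times X$ has countably many orbits. Enumerate the non-diagonal ones as $\omega_1, \omega_2, \ldots$, and for each $i$ and each $x\in X$ set $S_i(x) = \{y\in X : (x,y)\in \omega_i \cup \omega_i^{-1}\}$. Each $S_i(x)$ is a finite union of $\mathfrak G_x$-orbits, hence finite.

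Choose weights $w_i \geq 1$ with $w_i \to \infty$ (for instance $w_i = i$) and endow $X$ with the weighted graph in which the edge between $x$ and $y$ has weight $w_i$ whenever $(x,y)\in \omega_i\cup\omega_i^{-1}$. Set
\[
d(x,y) = \inf\left\{\sum_{k=1}^n w_{i_k} : x = z_0, z_1,\ldots, z_n = y,\ (z_{k-1},z_k)\in \omega_{i_k}\cup\omega_{i_k}^{-1}\right\},
\]
with $d(x,x)=0$. This $d$ is $\mathfrak G$-invariant and symmetric, satisfies the triangle inequality by construction, and is positive on pairs $x\neq y$ because every edge has weight at least $1$. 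It takes only finite values because any two distinct points are already joined by a single edge, namely the orbit $\omega_i$ containing $(x,y)$.

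The crux is the finiteness of every closed ball $B_R(x_0)$. Any $y\in B_R(x_0)$ is the endpoint of a path of total weight at most $R$, and every edge of such a path belongs to some $\omega_i$ with $w_i\leq R$. Since $w_i\to\infty$ there are only finitely many such orbits, say $\omega_{i_1},\ldots,\omega_{i_N}$; let $\Gamma_R$ denote the subgraph of $X$ obtained by keeping only their edges. Because every edge has weight at least $1$, such a path has at most $\lfloor R\rfloor$ edges, so $B_R(x_0)$ is contained in the set of vertices reachable from $x_0$ by at most $\lfloor R\rfloor$ edges in $\Gamma_R$. At each vertex $x$, the degree in $\Gamma_R$ equals $\sum_{k=1}^N |S_{i_k}(x)|$, a finite sum of finite numbers, so an easy induction on the number of steps shows that the set of vertices reachable in $\leq n$ edges is a finite union of finite sets, hence finite.

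The main obstacle is precisely this finite-ball condition, made delicate by the fact that $\mathfrak G$ may have infinitely many orbits in $X$ and that vertex degrees in $\Gamma_R$ vary from one $\mathfrak G$-orbit to the next. The iterative counting argument above sidesteps these issues, needing only that each single vertex has finite degree in $\Gamma_R$, which is a direct consequence of the compactness of stabilizers. In particular, no reduction to the transitive case and no explicit use of second countability of $\mathfrak G$ is required.
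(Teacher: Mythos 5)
Your construction is correct. The paper itself gives no argument: it obtains the statement as a particular case of \cite[Theorem 4.2]{A-M-N}, a general result producing proper invariant compatible metrics for proper actions of locally compact groups on suitable locally compact spaces. Your route is a self-contained, elementary substitute adapted to the discrete countable setting: you weight each non-diagonal $\mathfrak G$-orbit of $X\times X$ by a constant, let the weights tend to infinity, and take the shortest weighted-path metric on the resulting complete graph. The only inputs are that stabilizers are compact (hence have finite orbits, so each vertex has finite degree in the truncated graph $\Gamma_R$) and that $X\times X$ has countably many diagonal orbits; both come straight from properness and countability of $X$, so you genuinely avoid second countability of $\mathfrak G$ and any reduction to the transitive case. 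What the citation buys is generality (non-discrete phase spaces); what your argument buys is transparency and independence from the literature. Two cosmetic points: Lemma \ref{lem:top} is stated for faithful actions, but the facts you extract from it (stabilizers are compact and open) follow directly from properness and continuity of the action on a discrete space, with no faithfulness needed; and the phrase ``endpoint of a path of total weight at most $R$'' tacitly assumes the infimum is attained, which is automatic here since with $w_i=i$ every path weight is a positive integer (alternatively, run the ball-finiteness argument with $R+1$ in place of $R$). Neither point affects correctness.
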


   Any action $G\actson X$ which preserves a locally finite metric  is f.s.o.  The  converse holds when $X$ is countable, as an immediate consequence of the previous theorem and of Theorem \ref{thm:exist_proper}.

\begin{prop}\label{prop:exist_proper} Let $G$ be a  group acting on a countable set $X$.    The following conditions are equivalent:
\begin{itemize}
\item[(i)] there exists a $G$-invariant locally finite metric $d$ on $X$;
\item[(ii)] the action is f.s.o.
\end{itemize}
\end{prop}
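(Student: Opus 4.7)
The plan is to prove the two implications separately; both are short, since the heavy machinery is already in Theorems \ref{thm:exist_proper} and \ref{AMN}.

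For (i) $\Rightarrow$ (ii), I would argue directly without invoking either previous theorem. Assume $d$ is a $G$-invariant locally finite metric on $X$. Fix $x \in X$ and $y \in X$. Every element $g \in G_x$ satisfies $d(x, gy) = d(gx, gy) = d(x,y)$, so the orbit $G_x \cdot y$ is contained in the closed ball $B(x, d(x,y))$, which is finite by local finiteness of $d$. Hence every orbit of every stabilizer is finite, i.e., the action is f.s.o. (This is really just the observation already made in the paragraph preceding the proposition.)

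For (ii) $\Rightarrow$ (i), I would combine the two previous theorems. Since the $G$-action is f.s.o., Theorem \ref{thm:exist_proper} provides the Schlichting group $G' \subset \Sym(X)$, which is a t.d.l.c. group acting properly and faithfully on the discrete set $X$, and which contains $\rho(G)$ as a dense subgroup. Because $X$ is countable, Theorem \ref{AMN} applies to $G'$ and yields a $G'$-invariant locally finite metric $d$ on $X$. Transporting back via the inclusion $\rho : G \hookrightarrow G'$ (when $\rho$ is not injective, we simply pull back by $\rho$, which does not affect $G$-invariance), $d$ is automatically $G$-invariant, which establishes (i).

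There is essentially no obstacle, since the substantive content of the (ii) $\Rightarrow$ (i) direction is entirely delegated to Theorems \ref{thm:exist_proper} and \ref{AMN}; the only minor point to mention is that countability of $X$ is exactly what is required to invoke Theorem \ref{AMN}, and that the Schlichting completion exists precisely under the f.s.o. hypothesis, so the two countability/finiteness assumptions line up perfectly.
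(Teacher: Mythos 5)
Your proposal is correct and follows exactly the route the paper indicates: (i) $\Rightarrow$ (ii) is the direct ball argument showing stabilizer orbits lie in finite balls, and (ii) $\Rightarrow$ (i) passes through the Schlichting group via Theorem \ref{thm:exist_proper} and then applies Theorem \ref{AMN}. No gaps; the remark about non-faithful actions is harmless since the $G$-action on $X$ factors through $\rho(G) \subset G'$, so $G'$-invariance of the metric immediately gives $G$-invariance.
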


 \begin{rem}\label{rem:transit}Let $H$ be a subgroup of $G$ such that $G/H$ is countable. Observe that there exists a locally finite $G$-invariant metric on $G/H$ if and only if $H$ is an almost normal subgroup. For a locally compact group $\mathfrak G$, the existence of $\mathfrak G$-invariant (for the left $\mathfrak G$-action), proper  ({\it i.e.,} with compact balls), compatible ({\it i.e.,} defining the topology of $\mathfrak G$) metrics on $\mathfrak G$ has been studied in the seventies. Struble \cite{Struble} proved that a locally compact group $\mathfrak G$ has a proper  $\mathfrak G$-invariant  compatible metric if and only if it is second countable. This result was rediscovered in \cite{H-P}.  Of course, when a group $G$ is finitely generated, one may take the geodesic distance on the Cayley graph associated with any finite system of generators. 
\end{rem}

Let $G\actson X$ be an action that preserves a locally finite metric. In general, there does not exist a structure of locally finite connected graph having $X$ as set of vertices and such that the $G$-action is given by automorphisms of the graph. For instance let us consider a countable, non-finitely generated group $G$, acting on itself by translations. There exists on $G$ a $G$-invariant locally finite metric (for a concrete construction see \cite[\S 2]{Tu}), but this $G$-action does not preserve a  locally finite connected graph structure. On the other hand, when  $G$ is finitely generated we have the following result.

\begin{prop}\label{prop:graph} Let  $G$ be a finitely generated group and let  $G\actson X$ be a f.s.o. action on a countable set $X$. Then there exists a structure of locally finite connected graph having $X$ as set of vertices and such that the $G$-action is given by automorphisms of the graph.
\end{prop}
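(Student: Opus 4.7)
The plan is to build the graph as a union of two $G$-invariant families of edges: a Schreier-type family connecting each orbit internally (using the finite generation of $G$) and a bridging family linking distinct orbits (using that the action is f.s.o.). This extends the standard construction of a Schreier coset graph to the non-transitive setting.

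Concretely, I would fix a finite symmetric generating set $S \subset G \setminus \set{e}$ and enumerate a system $y_{1}, y_{2}, \dots$ of orbit representatives (finite or infinite, one per $G$-orbit of $X$). Put
\[
E_{1} = \set{\set{x, sx} : x \in X,\ s \in S,\ sx \neq x}, \qquad E_{2} = \bigcup_{k} \set{\set{g y_{k}, g y_{k+1}} : g \in G},
\]
the second union being indexed by consecutive orbit representatives in the enumeration. I take $\Gamma = (X, E_{1}\cup E_{2})$; both $E_{1}$ and $E_{2}$ are $G$-invariant by construction.

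Connectedness is straightforward: writing $g = s_{1} \cdots s_{n}$ with $s_{i} \in S$, the walk $y_{k}, s_{1} y_{k}, s_{1} s_{2} y_{k}, \dots, g y_{k}$ has consecutive pairs of the form $\set{x, s_{i+1} x}$ with $x = s_{1} \cdots s_{i} y_{k}$, which lie in $E_{1}$, so each orbit $G y_{k}$ is internally connected; the edges $\set{y_{k}, y_{k+1}} \in E_{2}$ then chain all orbits together.

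The main obstacle is local finiteness, which could a priori fail if there are infinitely many bridges. The key observation is that each vertex $x \in X$ belongs to a unique orbit $G y_{j}$, so only the two bridge orbits indexed by $(j-1,j)$ and $(j,j+1)$ can contribute an edge at $x$. For the bridge coming from $\set{y_{k}, y_{k+1}}$, with $x = g y_{k}$ (recall that $y_{k}$ and $y_{k+1}$ lie in different orbits, so the case $x = g'y_{k+1}$ is excluded when $x = gy_k$), the neighbors of $x$ obtained from this edge orbit are precisely the points of $g\, G_{y_{k}}\, y_{k+1}$, which is \emph{finite} by the f.s.o.\ hypothesis; the symmetric statement holds if $x = g y_{k+1}$. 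Together with the trivial bound $|S|$ on the $E_{1}$-degree, this proves local finiteness. Since $\Gamma$ has no loops ($sx\neq x$ in $E_{1}$ and $y_{k}\neq y_{k+1}$ in $E_{2}$) and we work with $2$-element subsets, $\Gamma$ is the desired locally finite, connected, $G$-invariant graph on $X$.
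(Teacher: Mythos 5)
There is a genuine gap: your set $E_{1}$ is \emph{not} $G$-invariant, contrary to what you assert. The image of an edge $\set{x,sx}$ under $g\in G$ is $\set{gx,\,gsx}$, and for this to lie in $E_{1}$ you would need $gsx=s'gx$ for some $s'\in S$ (or the symmetric condition), which fails in general because $gsg^{-1}$ need not belong to $S$. Concretely, take $G=\F_2=\scal{a,b}$ acting on $X=G$ by left translations (a faithful f.s.o.\ action with trivial stabilizers) and $S=\set{a^{\pm1},b^{\pm1}}$: the edge $\set{e,a}$ is sent by $g=b$ to $\set{b,ba}$, which is not of the form $\set{y,sy}$ since neither $bab^{-1}$ nor $ba^{-1}b^{-1}$ lies in $S$. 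This is the classical left/right issue with Cayley--Schreier graphs: the edges $\set{x,sx}$ defined through the left action are invariant under a commuting right action, not under the $G$-action itself. (Your connectedness walk has the same orientation slip, but that part is harmless: each orbit genuinely is connected by $E_1$-edges, as one sees by building the walk from $y_k$ by successively prepending generators.)

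The repair is forced and is exactly where the f.s.o.\ hypothesis must enter in the intra-orbit part of the argument, not only in the bridging part. You must replace $E_{1}$ by its $G$-saturation, i.e., on an orbit $G/H$ declare $\set{aH,bH}$ an edge if and only if $a^{-1}b\in\bigcup_{s\in S}HsH$; this relation depends only on $a^{-1}b$, hence is $G$-invariant. But then your ``trivial bound $\abs{S}$ on the $E_{1}$-degree'' is no longer available: the degree of a vertex is the number of left cosets $g_iH$ contained in $\bigcup_{s\in S}HsH$, and this number is finite precisely because $H$ is almost normal (each $HsH/H$ is in bijection with $H/(H\cap sHs^{-1})$), which is the f.s.o.\ hypothesis. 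With this correction your construction coincides with the paper's proof: double-coset edges inside each orbit, plus the $G$-orbits of bridges $\set{y_k,y_{k+1}}$ between consecutive orbit representatives, whose local finiteness you do justify correctly via the finiteness of the $G_{y_k}$-orbit of $y_{k+1}$.
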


\begin{proof} We first assume that the action is transitive: there exists an almost normal subgroup  $H$ of $G$ such that $G$ acts by left translations on $X = G/H$. Let $S = S^{-1}$ be a finite symmetric set of generators of $G$ not containing the unit $e$ of $G$. Since $H$ is almost normal, there is a finite subset $\set{g_1,\dots, g_l}$ of $G$ such that
$$\bigcup_{s\in S} HsH = \bigsqcup_{1\leq i \leq l} g_iH.$$
Let $\Gamma = (G/H, E)$ be the graph such that a pair $\set{fH,gH}$, with $fH\not= gH$, is an edge if and only if $g^{-1} f \in \bigcup_{s\in S} HsH$. This graph is connected, locally finite (each vertex having the same valency $\leq l$) and $G$ acts by automorphisms on it.

Let us consider now the general case. We have $X = \sqcup_{n\geq 1} G/H_n$, where the $H_n$ are mutually commensurable almost normal subgroups of $G$. We equip each $G$-invariant subset $G/H_n$  with a structure of locally finite connected graph as in the first part of the proof. For every $g\in G$, we put an edge between $gH_1\in G/H_1$ and $gH_2\in G/H_2$. The number of edges connecting $gH_1$ to some element of $G/H_2$ is finite. Indeed, the action of $H_1$ on $G/H_2$ has finite orbits, so $H_1H_2 = \sqcup_{1\leq i \leq l} g_i H_2$. Now we observe that there is an edge connecting $H_1$ and $gH_2$ if and only if there is some $h\in H_1$ such that $hH_2 = gH_2$. It follows that $gH_2 = g_iH_2 \in G/H_2$ for some $i \in \set{1,\dots,l}$. Similarly, one sees that the number of edges connecting any $gH_2\in G/H_2$ to some element of $G/H_1$ is finite. Finally, for every $n$, we put in the same way edges connecting the elements of $G/H_n$ to suitable elements of $G/H_{n+1}$. This defines a structure of locally finite connected graph having $X$ as set of vertices and such that the $G$-action is given by automorphisms of the graph.
\end{proof}

 \begin{rem}  We have seen in Proposition \ref{prop:subject} that all the embeddings of a group $G$ as dense subgroup of a t.d.l.c. group are provided by  families of mutually commensurate almost normal subgroups of $G$ with trivial core. It may be useful to understand when two such families define the same embedding (up to isomorphism in the obvious sense). When $G$ is countable, two countable such families $\mathbold{H} = (H_i)_{i\in I}$ and $\mathbold{K} = (K_j)_{j\in J}$ give the same embedding if and only if 
 \begin{itemize}
 \item the $H_i$ and $K_j$ are  commensurate for every $i\in I$ and $j\in J$;
 \item every $H_i$ contains a finite intersection of conjugates of the $K_j$  and  every $K_j$ contains a finite intersection of conjugates of the $H_i$.
 \end{itemize}
This result will not be needed in our paper and we omit the proof.
 \end{rem}

\section{Amenable f.s.o. actions}\label{sect:amenact}

\subsection{Some characterizations and consequences} F.s.o. amenable actions are characterized by the following properties.

 \begin{prop}\label{co_sco3}  Let $G\actson X$ be a f.s.o. action of a group $G$ on $X$.  Let $G'$ be the corresponding Schlichting group. The following conditions  are equivalent:
 \begin{itemize}
 \item[(i)] the global action $G\actson X$ is amenable;
 \item[(ii)] there exists a $G$-orbit $Y$ such that the action $G\actson Y$ is amenable;
 \item[(iii)] the  action of $G$ on each of its orbits is amenable;
 \item[(iv)] $G'$ is amenable.
  \end{itemize}
 \end{prop}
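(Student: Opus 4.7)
The directions (iv) $\Rightarrow$ (i), (iv) $\Rightarrow$ (iii) and (iii) $\Rightarrow$ (ii) are immediate: if $G'$ is amenable then any of its actions admits an invariant mean, which is \emph{a fortiori} $G$-invariant; and (iii) trivially implies (ii). The content of the proposition lies in the converses (i) $\Rightarrow$ (iv) and (ii) $\Rightarrow$ (iv), which I would obtain through the following key step.

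\emph{Key lemma.} Let $\mathfrak H$ be a t.d.l.c.\ group, $\Gamma \subset \mathfrak H$ a dense subgroup, and $\mathfrak H$ act continuously on a discrete set $Z$. Then $\Gamma \actson Z$ is amenable iff $\mathfrak H \actson Z$ is amenable. My plan for its proof: one direction is trivial. For the other, let $m$ be a $\Gamma$-invariant mean on $\ell^\infty(Z)$. For any compact open subgroup $U \subset \mathfrak H$ and any $g' \in \mathfrak H$, density of $\Gamma$ together with openness of $g'U$ yields $\gamma \in \Gamma$ and $u \in U$ with $g' = \gamma u$; then for any $U$-invariant $f \in \ell^\infty(Z)^U$ one computes
\[
(g' \cdot f)(z) = f(u^{-1}\gamma^{-1}z) = f(\gamma^{-1}z) = (\gamma \cdot f)(z),
\]
whence $m(g' \cdot f) = m(\gamma \cdot f) = m(f)$. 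Thus $m$ is $\mathfrak H$-invariant on the union $\bigcup_U \ell^\infty(Z)^U$ over all compact open $U \subset \mathfrak H$. This union is norm-dense in the algebra $C_b(Z)^c$ of functions with norm-continuous orbit map, since the Haar average $P_U f = \int_U u \cdot f \,du$ is $U$-invariant and satisfies $\|P_U f - f\|_\infty \leq \sup_{u \in U}\|u\cdot f - f\|_\infty$, which is small once $U$ is a sufficiently small compact open subgroup (van Dantzig's theorem). By continuity $m$ extends to an $\mathfrak H$-invariant state on $C_b(Z)^c$, and Greenleaf's equivalence recalled in Definition \ref{def:green} then furnishes an $\mathfrak H$-invariant mean on $C_b(Z) = \ell^\infty(Z)$.

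To complete the proof: (i) $\Rightarrow$ (iv) follows by applying the key lemma with $\mathfrak H = G'$ and $Z = X$, yielding amenability of $G' \actson X$; since $G'$ acts properly (Theorem \ref{thm:exist_proper}) with compact stabilizers, Lemma \ref{lem:useful} delivers amenability of $G'$. For (ii) $\Rightarrow$ (iv), let $Y$ be the amenable $G$-orbit and let $G'_Y$ be the Schlichting group of the transitive action $G \actson Y$. Applying the key lemma with $\mathfrak H = G'_Y$ and $Z = Y$ we get amenability of $G'_Y \actson Y$, hence of $G'_Y$ by Lemma \ref{lem:useful}. The restriction-to-$Y$ map $\rho_Y : G' \to G'_Y$ is a continuous homomorphism with dense image (containing $G$); assuming $X$ countable so that $G'$ is second countable by Lemma \ref{lem:top}, the open mapping theorem forces $\rho_Y(G')$ to be locally compact, hence closed in $G'_Y$, and therefore equal to $G'_Y$. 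Its kernel $\bigcap_{y \in Y} G'_y$ is compact as an intersection of compact subgroups, so $G'$ is an extension of an amenable group by a compact one, and is itself amenable. The entire difficulty is concentrated in the key lemma: the passage from $\Gamma$-invariance to $\mathfrak H$-invariance of a mean crucially exploits van Dantzig's basis of compact open subgroups to collapse each $\mathfrak H$-orbit of a $U$-invariant function to a single $\Gamma$-orbit.
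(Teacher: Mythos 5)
Your key lemma and the way you deduce (i) $\Rightarrow$ (iv) from it are correct, and this part is essentially the paper's own argument: the paper passes from $G$-invariance to $G'$-invariance of the mean on $\ell^\infty(X)^c$ in one stroke, using density of $G$ and norm-continuity of $g\mapsto{}_gf$, where you instead go through the norm-dense subalgebra $\bigcup_U\ell^\infty(X)^U$ of functions fixed by a compact open subgroup; both routes then invoke Greenleaf's equivalence and Lemma \ref{lem:useful}. The genuine gap is in your (ii) $\Rightarrow$ (iv). The assertion that ``the open mapping theorem forces $\rho_Y(G')$ to be locally compact, hence closed in $G'_Y$'' is not valid: the open mapping theorem concerns \emph{surjective} homomorphisms, and a continuous homomorphism with dense image from a second countable locally compact group need not have closed or locally compact image --- the inclusion $\Z\hookrightarrow\Z_p$ is a counterexample. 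Without surjectivity of $\rho_Y$ you cannot present $G'_Y$ as a quotient of $G'$, and knowing that a \emph{dense subgroup} of $G'/\ker\rho_Y$'s image sits in an amenable group gives you nothing about $G'$. Surjectivity of $\rho_Y$ is in fact true, but the correct reason is properness of the $G'$-action on $X$: if $g_i|_Y\to h$ with $g_i\in G$, then for a fixed $y_0\in Y$ one eventually has $g_iy_0=h(y_0)$, so the $g_i$ eventually lie in the compact coset $\set{g'\in G': g'y_0=h(y_0)}$ of $G'_{y_0}$ and a subnet converges in $G'$ to a preimage of $h$.

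All of this machinery is avoidable. The paper closes the cycle as (iv) $\Rightarrow$ (iii) $\Rightarrow$ (ii) $\Rightarrow$ (i) $\Rightarrow$ (iv), where (ii) $\Rightarrow$ (i) is the one-line observation you overlooked: if $m$ is a $G$-invariant mean on $\ell^\infty(Y)$ for a single $G$-orbit $Y$, then $f\mapsto m(f|_Y)$ is a $G$-invariant mean on $\ell^\infty(X)$, because $Y$ is $G$-invariant. With that remark, your correct proof of (i) $\Rightarrow$ (iv) already completes the proposition, and the second application of the key lemma, the restriction homomorphism, and the compact-by-amenable extension argument can all be deleted --- along with the countability hypothesis on $X$ that you had to add and that is not in the statement.
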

 
 \begin{proof} (iv) $\Rightarrow$ (iii) $\Rightarrow$ (ii) $\Rightarrow$ (i) is obvious. Let us show that (i) $\Rightarrow$ (iv). Let $m$ be a $G$-invariant mean on $\ell^\infty(X)$. We denote here by $\ell^\infty(X)^c$ the subalgebra of $\ell^\infty(X)$ formed by the functions $f$ such that $g\mapsto _{g}\!\!f$ is continuous from $G'$ into $\ell^\infty(X)$. Since $G$ is dense in $G'$, we see that $m$ is a $G'$-invariant mean on $\ell^\infty(X)^c$. It follows that the action of $G'$ on $X$ is amenable. Since the stabilizers of this action are compact, hence amenable, it follows from Lemma \ref{lem:useful} that $G'$ is amenable.
 \end{proof}

\begin{rem} Let $G\actson X$ be an amenable f.s.o. action of a non-amenable group. Then the stabilizers $G_x$ are very large in the sense that for every $g\in G$ the group $gG_xg^{-1} \cap G_x$ is not amenable. Indeed otherwise $G_x$ would be amenable since $G_x$ is almost normal, and $G$ would be amenable since the action $G\actson G_x$ is amenable.
\end{rem}

\begin{cor}\label{cor:Eymard} Let $G\actson X$ be an amenable f.s.o. action and let $H$ be a subgroup of $G$.  Then the action of $H$ on its $H$-orbits is amenable. 
\end{cor}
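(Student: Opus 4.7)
The plan is to reduce Corollary~\ref{cor:Eymard} to Proposition~\ref{co_sco3} applied to the restricted action $H\actson X$. Since amenability on each orbit is one of the equivalent conditions of Proposition~\ref{co_sco3}, it suffices to prove that $H\actson X$ is an amenable f.s.o.\ action, and for this I would verify (a) the f.s.o.\ property and (b) amenability of the corresponding Schlichting group.

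First I would check that the restricted action $H\actson X$ is f.s.o. For $x\in X$, the $H$-stabilizer equals $H_x=H\cap G_x$, and for any $y\in X$ the orbit $H_x y$ is contained in $G_x y$, which is finite by assumption. Hence each $H$-stabilizer has finite orbits, so $H\actson X$ is f.s.o. Let $H'$ denote its Schlichting group (the closure of $H$ in $\mathrm{Map}(X)$).

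The main step is to show $H'$ is amenable. By Proposition~\ref{co_sco3} applied to the hypothesis, $G'$ is amenable. Since $G'$ is closed in $\mathrm{Map}(X)$ and contains $H$, the closure $H'$ of $H$ in $\mathrm{Map}(X)$ sits inside $G'$ as a subset. Moreover, by Lemma~\ref{lem:top}, the locally compact group topology on both $G'$ and $H'$ coincides with the topology of pointwise convergence on $X$, so the inclusion $H'\hookrightarrow G'$ is a topological embedding, and $H'$ is closed in $G'$ as a topological subgroup. Amenability passes to closed subgroups of locally compact groups, so $H'$ is amenable.

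Applying Proposition~\ref{co_sco3} to the f.s.o.\ action $H\actson X$, the amenability of $H'$ gives the amenability of the action of $H$ on each of its orbits, which is what the corollary asserts. The only delicate point is the identification of $H'$ as a closed subgroup of $G'$, and this follows cleanly from the fact that both Schlichting groups carry the pointwise convergence topology by Lemma~\ref{lem:top}; no further estimate or construction is required.
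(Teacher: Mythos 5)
Your argument is correct, but it is more roundabout than what the paper intends: the corollary is stated without proof because it follows in one line from Proposition~\ref{co_sco3}. The shortest route is to note that a $G$-invariant mean on $\ell^\infty(X)$ is in particular $H$-invariant, so the global action $H\actson X$ is amenable; it is f.s.o.\ for exactly the reason you give ($H_x y\subset G_x y$); and then the implication (i)~$\Rightarrow$~(iii) of Proposition~\ref{co_sco3}, applied to $H\actson X$, concludes. You instead route the argument through condition (iv) twice: you extract amenability of $G'$, identify the Schlichting group $H'$ of $H\actson X$ as a closed subgroup of $G'$, and invoke the hereditarity of amenability under passage to closed subgroups of locally compact groups. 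All of these steps are valid --- in particular the identification of $H'$ inside $G'$ is sound, since $G'$ is closed in $\Map(X)$ and both groups carry the pointwise-convergence topology, so the closure of (the image of) $H$ in $\Map(X)$ is a closed subgroup of $G'$ --- but the closed-subgroup property of amenability is an external standard fact that the paper never needs at this point, whereas restricting an invariant mean to a subgroup costs nothing. What your version buys is the explicit structural observation that $H'$ sits as a closed subgroup of $G'$, which is of some independent interest; what it costs is this extra input. Either way the corollary is established.
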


\begin{cor} Let $K$ be a co-amenable almost normal subgroup of $G$ and let $H$ be a subgroup of $G$. Then $H\cap K$ is a co-amenable almost normal subgroup of $H$.
\end{cor}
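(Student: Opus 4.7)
The plan is to read off this corollary from Corollary \ref{cor:Eymard} together with a small direct verification of the Hecke-pair property. The action $G\actson G/K$ is an amenable f.s.o.\ action (f.s.o.\ because $K$ is almost normal, amenable by hypothesis). Taking $X=G/K$ and applying Corollary \ref{cor:Eymard} to the subgroup $H\leq G$ gives that $H$ acts amenably on each of its orbits in $G/K$. Since the $H$-orbit of the base point $eK$ is $HK/K$, which is $H$-equivariantly identified with $H/(H\cap K)$ via the bijection $h(H\cap K)\mapsto hK$, we conclude that $H\actson H/(H\cap K)$ is amenable, i.e.\ $H\cap K$ is co-amenable in $H$.

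It remains to verify that $H\cap K$ is almost normal in $H$, that is, for every $h\in H$ the subgroups $H\cap K$ and $h(H\cap K)h^{-1}=H\cap hKh^{-1}$ are commensurate in $H$. First I would compute the intersection:
\[
(H\cap K)\cap h(H\cap K)h^{-1} \,=\, H\cap K\cap hKh^{-1}.
\]
Since $K$ is almost normal in $G$ and $h\in G$, the group $K\cap hKh^{-1}$ has finite index in $K$. Intersecting a finite coset decomposition $K=\bigsqcup_{i=1}^n g_i(K\cap hKh^{-1})$ with $H$ and regrouping the nonempty pieces (each such piece equals $h'(H\cap K\cap hKh^{-1})$ for some $h'\in H$) shows that $H\cap K\cap hKh^{-1}$ has finite index in $H\cap K$. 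The same argument with $K$ and $hKh^{-1}$ exchanged gives the finite index in $H\cap hKh^{-1}$. Hence $(H,H\cap K)$ is a Hecke pair.

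I do not anticipate any real obstacle: the only point to be careful about is the identification of the $H$-orbit of $eK$ in $G/K$ with the coset space $H/(H\cap K)$, which is the standard orbit–stabilizer bijection and is $H$-equivariant, so that amenability of one action transfers to the other. The whole argument is essentially a repackaging of Corollary \ref{cor:Eymard} in the transitive case, combined with the elementary stability of commensuration under intersection with a subgroup.
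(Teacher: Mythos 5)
Your proof is correct and follows essentially the same route as the paper: apply Corollary \ref{cor:Eymard} to the amenable f.s.o.\ action $G\actson G/K$, restrict to the $H$-orbit of the base point $eK$, and identify that orbit with $H/(H\cap K)$ via orbit--stabilizer. Your explicit verification that $(H,H\cap K)$ is a Hecke pair is a correct elementary argument for a point the paper leaves implicit (it follows there from the fact that the restriction of an f.s.o.\ action to a subgroup is still f.s.o.).
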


\begin{proof} We apply the previous corollary to the action of $G$ on $G/K$ and $x_0 = K\in G/K$. So the action of $H$ on the orbit $Hx_0$ is amenable. The stabilizer of $x_0$ in $H$ is $H\cap K$ and we see that
$H\cap K$ is a co-amenable almost normal subgroup of $H$.
\end{proof}

\begin{cor} Let $\widetilde{\cA}_{1}$ be the family of countable groups that admit a faithful amenable f.s.o. action. A group $G$ belongs to $\widetilde{\cA}_{1}$ if and only if it is a subgroup of a t.d.l.c. amenable group.
\end{cor}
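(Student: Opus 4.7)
The plan is to prove each implication separately. The forward direction is a direct application of earlier results: given $G\in\widetilde{\cA}_{1}$ with a faithful amenable f.s.o.\ action on a countable set $X$, Theorem~\ref{thm:exist_proper} produces the Schlichting completion $G'$, a t.d.l.c.\ group into which $G$ embeds by faithfulness, and the implication (i)$\Rightarrow$(iv) of Proposition~\ref{co_sco3} transfers amenability of the action to amenability of $G'$.

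For the reverse direction, suppose $G$ is a countable subgroup of an amenable t.d.l.c.\ group $\mathfrak{G}$. I would first replace $\mathfrak{G}$ by the closure $\overline{G}$, which remains amenable (closed subgroup of an amenable locally compact group) and t.d.l.c., and in which $G$ is now dense. The aim is to produce a countable family of mutually commensurate almost normal subgroups of $G$ with trivial core on which $G$ acts amenably. The main obstacle is countability of the family: $\overline{G}$ is separable but may fail to be first countable, so Proposition~\ref{prop:subject} does not automatically supply a countable index set. My plan is to enumerate $G\setminus\{e\}$ and invoke van~Dantzig's theorem to choose, for each $g$, a compact open subgroup $U_{g}\leq \overline{G}$ with $g\notin U_{g}$; then set $H_{g}:=U_{g}\cap G$. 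Since any two compact open subgroups of $\overline{G}$ are commensurate (Remark~\ref{rem:commensurate}), the embedding $H_{g}/(H_{g}\cap H_{g'})\hookrightarrow U_{g}/(U_{g}\cap U_{g'})$ (and the analogous one with $kU_{g}k^{-1}$, $k\in G$, in place of $U_{g'}$) shows that the $H_{g}$'s are mutually commensurate and each is almost normal in $G$. Triviality of the core follows since any element of $\bigcap_{g\neq e,k\in G}kH_{g}k^{-1}$ lies in $G\cap\bigcap_{g\neq e}U_{g}=\{e\}$. This yields a faithful f.s.o.\ action of $G$ on the countable set $X=\sqcup_{g\in G\setminus\{e\}}G/H_{g}$.

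For amenability of this action, amenability of $\overline{G}$ gives an invariant mean on each coset space $\overline{G}/U_{g}$, which is automatically also $G$-invariant. Density of $G$ in $\overline{G}$ together with the openness of $U_{g}$ forces $GU_{g}=\overline{G}$ (the left-hand side is clopen and meets the dense $G$), so $G$ acts transitively on $\overline{G}/U_{g}$ and the canonical map $kH_{g}\mapsto kU_{g}$ is a $G$-equivariant bijection $G/H_{g}\to \overline{G}/U_{g}$. Hence $G\actson G/H_{g}$ is amenable for every $g$, and the implication (iii)$\Rightarrow$(i) of Proposition~\ref{co_sco3} concludes that $G\actson X$ is amenable, placing $G$ in $\widetilde{\cA}_{1}$.
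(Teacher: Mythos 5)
Your proof is correct and follows the paper's own one-line argument -- combine Propositions \ref{prop:subject} and \ref{co_sco3} -- in both directions, with the reverse direction essentially re-running the proof of Proposition \ref{prop:subject} for the closure $\overline{G}$. The only divergence is your workaround for the countability of the index set, which is not actually needed: the definition of $\widetilde{\cA}_{1}$ places no countability requirement on the set acted upon, so Proposition \ref{prop:subject} applies as stated even when $\overline{G}$ is not first countable; your explicit countable family $(H_{g})_{g\in G\setminus\{e\}}$ is nevertheless correct and has the side benefit of recovering Remark \ref{rem:sep}.
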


\begin{proof} Use Propositions \ref{prop:subject} and \ref{co_sco3}. \end{proof}

\begin{rem}\label{rem:sep} Let $G\in \widetilde{\cA}_{1}$.  Since $G$ is countable,  it acts faithfully and amenably on a locally finite metric space. Indeed, let $G\actson X$ be a faithful, amenable f.s.o. action. Then there is a $G$-invariant countable subset $Y$ on which the restricted action still has the same properties. It follows that $G$ embeds in a second countable t.d.l.c. amenable group.
\end{rem}

\begin{rem}\label{rem:obstruct}  The class $\widetilde{\cA}_{1}$ includes the amenable countable groups and the residually finite groups. However there are also countable groups that do not belong to $\widetilde{\cA}_{1}$.  It is the case for exotic groups like non-amenable quasi-finite groups. 

A less exotic obstruction is provided by the property (T). 
Indeed, assume that $G$ has Property (T) and let    $G\actson X$ be a faithful, amenable, f.s.o. action. Since the $G$-action on every orbit $Gx$ is amenable, the trivial representation of $G$ is weakly contained into the representation of $G$ in $\ell^2(Gx)$ by left translations. If follows that there is a non-zero $G$-invariant vector in $\ell^2(Gx)$ since $G$ has Property (T). Therefore $Gx$ is finite for every $x\in X$ and, as a result, $G$ is residually finite. Similarly, if $G \in \widetilde{\cA}_{1}$, every Property (T) subgroup must be residually finite.
 \end{rem}
 
\subsection{Amenable actions on locally finite graphs} The proposition \ref{co_sco3} applies in particular to group actions by automorphisms of a locally finite connected graph $\Gamma = (X,E)$. A subgroup $G$   of $\Aut(\Gamma)$ acts amenably on the set $X$ of vertices of $\Gamma$ if and only if the closure $G'$ of $G$ in $\Aut(\Gamma)$ is an amenable locally compact group. Necessary or sufficient conditions  for $G\actson X$ to be amenable have been studied by several authors.  In order to state their results, we need to introduce the notion  of end of a graph $\Gamma$. A {\it ray} (or {\it half-line}) is a sequence $(x_0, x_1, \dots)$ of successively adjacent vertices without repetitions. Two rays $R_1$ and $R_2$ are said to be in the same {\it end} if there is a ray $R_3$ that contains infinitely many vertices in $R_1$ and in $R_2$.  We denote by $[x_0, x_1, \dots]$ the end corresponding to $(x_0, x_1, \dots)$. If an end contains infinitely many disjoint rays, it is called {\it thick}. Otherwise, one says that the end is {\it thin}. In particular, when $\Gamma$ is a tree, two rays are in the same end if and only if their intersection is a ray. In this case every end is thin. 

\begin{thm}\label{Woess} Let $\Gamma = (X,E)$ be a   locally finite connected graph, and let $G$ be any subgroup of $\Aut(\Gamma)$. 
\begin{itemize} 
\item[(i)] If the action of $G$ on $X$ is amenable then one of the following three statements holds:
\begin{itemize}
\item[(a)] $G$ fixes a finite subset of $X$;
\item[(b)] $G$ fixes an end of $X$;
\item[(c)] $G$ fixes a set of two ends which are the directions of a hyperbolic automorphism and its inverse.
\end{itemize}

\item[(ii)] Moreover if $G\actson X$ is  amenable, then
\begin{itemize}
\item[(d)] $G$  does not contain any  (non-abelian) free group, {\rm discrete} in $\Aut(\Gamma)$;
\item[(e)] $G$  does not contain any (non-abelian) free group {\rm acting freely}  on $X$.
\end{itemize}
\item[(iii)] Conversely, if $($a$)$ or $($c$)$ holds or if $G$ fixes a thin end, then the action of $G$ is amenable.
\item[(iv)] When $\Gamma$ is a tree, then each of the conditions $($d$)$ and $($e$)$ is equivalent to the amenability of the action.
\end{itemize}
\end{thm}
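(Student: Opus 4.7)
The plan is to base every part on the reduction to the Schlichting completion $G' = \overline{\rho(G)} \subseteq \Aut(\Gamma)$, which by Proposition \ref{co_sco3} is amenable as a t.d.l.c.\ group if and only if $G\actson X$ is amenable. Closure in $\Aut(\Gamma)\subseteq\Map(X)$ preserves pointwise and setwise stabilizers (since the relevant actions on $X$, on $\Omega\Gamma$ and on pairs of ends are continuous), so $G$ and $G'$ fix exactly the same finite subsets of $X$, the same ends, and the same pairs of ends. Each assertion thus reduces to a statement about the closed amenable subgroup $G'$ of $\Aut(\Gamma)$, and I may freely replace $G$ by $G'$ in the sequel.

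For (i), I would consider the natural continuous action of $G'$ by homeomorphisms on the end compactification $\widehat X = X \sqcup \Omega\Gamma$, a compact metrizable $G'$-space. Amenability of $G'$ produces a $G'$-invariant Borel probability measure $\nu$ on $\widehat X$. If $\nu$ charges some $x\in X$, properness of $G'\actson X$ forces the orbit $G' x$ to be finite, giving (a). Otherwise $\nu$ is concentrated on $\Omega\Gamma$, and the classical contraction dynamics of $\Aut(\Gamma)$ on $\Omega\Gamma$ (a hyperbolic automorphism contracts $\Omega\Gamma\setminus\{\omega^-\}$ toward its attracting end $\omega^+$) force $\nu$ to be supported on at most two ends; the one-end case yields (b), and the two-end case produces a hyperbolic element whose axis joins the two ends, yielding (c).

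For part (iii), in case (a) the kernel of $G'\to\Sym(F)$ is $\bigcap_{x\in F}G'_x$, which is compact and open, so $G'$ is compact, hence amenable. In case (c), $G'$ preserves the bi-infinite axis $L$ of the hyperbolic automorphism, yielding a map $G'\to\Iso(L)\simeq\Z\rtimes\Z/2$ with kernel $\bigcap_{v\in L}G'_v$ compact; $G'$ is an extension of an amenable group by a compact group, hence amenable. For a thin end $\omega$ fixed by $G'$, the finitely many disjoint rays representing $\omega$ support a Busemann-type horocyclic structure giving a homomorphism $G'\to\Z$ with compact kernel, again yielding amenability. For (ii)(d), a discrete non-abelian free subgroup of $\Aut(\Gamma)$ is closed in $G'$, and every closed subgroup of an amenable t.d.l.c.\ group is amenable, contradicting $\F_2$. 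For (ii)(e), if $\F_2\leq G$ acts freely on $X$, then by Corollary \ref{cor:Eymard} the restricted action $\F_2\actson \F_2 x$ is amenable; its stabilizers are trivial, so Lemma \ref{lem:useful} would force $\F_2$ to be amenable, absurd.

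For (iv), every end of a tree is thin, so cases (a), (b), (c) all imply amenability by (iii); together with (ii) this gives amenable $\Rightarrow$ (d) and amenable $\Rightarrow$ (e). Conversely, I would invoke the Tits alternative for $\Aut(T)$: any subgroup of $\Aut(T)$ either satisfies one of (a), (b), (c) or contains two hyperbolic elements with transverse axes, which generate a non-abelian free subgroup acting freely on vertices. Since vertex stabilizers in $\Aut(T)$ are compact open, any subgroup acting freely on vertices intersects them trivially and is therefore discrete in $\Aut(T)$; consequently a failure of (e) immediately yields a failure of (d), while the Tits alternative ensures that the failure of (e) puts us in one of (a), (b), (c), hence yields amenability by (iii). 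The chief obstacle is the dynamical classification behind (i): ruling out $G'$-invariant probability measures supported on three or more ends requires the delicate analysis of the contraction dynamics of hyperbolic automorphisms on $\Omega\Gamma$, and the thin-end case of (iii) also depends on the technical horocyclic/Busemann construction of Woess that exploits local finiteness in an essential way.
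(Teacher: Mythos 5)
The paper does not actually prove (i), (iii) or (iv): it attributes (i) and (iii) to \cite{Woess89} and (iv) to \cite{Nebbia} and \cite{PV}, and only argues (ii). Your proof of (d) is exactly the paper's ($G'$ is a closed amenable subgroup of $\Aut(\Gamma)$ by Proposition \ref{co_sco3}, a discrete subgroup is closed, and closed subgroups of amenable locally compact groups are amenable), and your derivation of (e) from Corollary \ref{cor:Eymard} and Lemma \ref{lem:useful} is a correct alternative to the paper's one-line remark that a freely acting subgroup meets the open vertex stabilizers trivially, hence is discrete, so that (d) implies (e). Your sketch of (iv) is essentially the Nebbia/Pays--Valette argument (ping-pong on two hyperbolic elements with transverse axes yields a purely hyperbolic, hence freely acting and discrete, free subgroup whenever no finite vertex set, end, or pair of ends is preserved), modulo the small point that ``preserves a pair of ends'' is not literally case (c) unless a hyperbolic element realizes them; in a tree this is repaired by passing to the index-two subgroup fixing each of the two (thin) ends.

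For the parts you reconstruct from scratch there are genuine gaps. In the thin-end case of (iii), the kernel of the Busemann homomorphism $G'\to\Z$ is \emph{not} compact in general: for the ascending HNN-extensions of Sections 5.3--5.4 of this very paper (e.g.\ $BS(1,q)$ acting on $T_q$ and fixing $\omega$), that kernel is the strictly increasing union $\bigcup_n G_n$ of vertex stabilizers along the ray, which is open but unbounded. The conclusion survives because an increasing union of compact open subgroups is amenable and extensions of $\Z$ by amenable groups are amenable, but the step as written fails. In case (c) of (iii) for a general graph, the stabilizer of the pair $\set{\omega_0,\omega_1}$ need not preserve any single double ray $L$ (two rays converging to the same end of a graph need not lie at bounded Hausdorff distance), so the homomorphism to $\Iso(L)\simeq \Z\rtimes\Z/2$ is not defined; Woess's argument replaces $L$ by an invariant strip with finite slices, and this matters because the ends in (c) may be thick. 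Finally, the heart of (i) --- excluding $G'$-invariant probability measures on $\Omega\Gamma$ that are non-atomic or supported on three or more ends, and treating groups with no hyperbolic element at all --- is precisely the content of Woess's theorem, and you explicitly leave it unproved. So the proposal fully establishes (ii), gives a repairable but currently incorrect sketch of (iii) and a near-complete (iv), and does not prove (i); since the paper itself only cites these results, you should either do the same or supply the missing dynamical analysis.
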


Recall that an automorphism $\gamma$ is hyperbolic if it leaves no finite subset of vertices in $X$ invariant and  fixes a unique pair of ends $\omega_0$, $\omega_1$. These ends are the directions of $\gamma$ and of $\gamma^{-1}$: one has $\lim_{n} \gamma^n x = \omega_0$ and $\lim_{n} \gamma^{-n} x = \omega_1$ for $x\in X$, in the topology of the compactification of $X$ by the ends of $\Gamma$. 

The items (i) and (iii) are due to Woess \cite[Theorems 1 and 2]{Woess89}.  Observe that $G'$ is a closed amenable subgroup of $\Aut(\Gamma)$ when $G\actson X$ is amenable, and therefore $G$ cannot contain a discrete free subgroup of $\Aut(\Gamma)$. Moreover (d) implies (e) since any subgroup acting freely  on $X$ is discrete. The assertion (iv) is due to Nebbia  \cite{Nebbia} and Pays-Valette \cite{PV}. It may happen that $G$ fixes a thick end whereas $G$ does not act amenably, as shown by an example in \cite{Woess89}. 

\begin{rem}\label{rem:free} It is interesting to note that if a closed group $\mathfrak G$ of automorphisms  of a   locally finite   connected graph $\Gamma = (X,E)$ satisfies none of (a), (b), (c), then  it contains the free group $\F_2$ on two generators as a discrete subgroup (see   \cite{Nebbia},  \cite[Theorem 8]{Mo92}, \cite{Jung}). It is in particular the case if $\Gamma$ only has thin ends (for instance is a tree) and $\mathfrak G$ is not  amenable, by (iii) in the previous theorem and Proposition \ref{co_sco3}.
\end{rem}

\section{Faithful, transitive,  amenable f.s.o. actions}\label{sect:transitive}
We denote by ${\mathcal A}_{1}$ {\it the class of countable groups that admit a faithful, transitive, amenable f.s.o. action}, or equivalently {\it that admit a co-amenable almost normal subgroup with trivial core}. 

\subsection{Some properties of the class ${\mathcal A}_{1}$} We first give a characterization of this class.
\begin{prop}\label{prop:equiv_alnor} Let $G$ be a countable group. The following conditions are equivalent:
\begin{itemize} 
\item[(i)] $G\in {\mathcal A}_{1}$;
\item[(ii)] $G$ embeds densely in a  locally compact amenable group $\mathfrak G$ that contains an open compact subgroup  with trivial $G$-core;
\item[(iii)] $G$ embeds densely in a  locally compact amenable group $\mathfrak G$ that has a proper, faithful and almost transitive action on a  set $X$.
\end{itemize} 
\end{prop}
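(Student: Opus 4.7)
The plan is to prove the cycle (i) $\Rightarrow$ (ii) $\Rightarrow$ (iii) $\Rightarrow$ (i), stitching together the machinery of Schlichting completions from Section \ref{sect:f.s.o.} with Proposition \ref{co_sco3}.

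For (i) $\Rightarrow$ (ii), I would start with a faithful, transitive, amenable f.s.o. action $G \actson G/H$, where $H$ is an almost normal subgroup of $G$ with trivial core. Setting $\mathfrak{G} = G(H)$, Theorem \ref{thm:exist_proper} gives that $\mathfrak{G}$ is t.d.l.c. and that $\rho_H(G)$ is dense in it, while Proposition \ref{co_sco3} upgrades amenability of the action to amenability of $\mathfrak{G}$. The closure $H'$ of $\rho_H(H)$ in $\mathfrak{G}$ is a compact open subgroup whose $G$-core is trivial, precisely as observed in the discussion following Proposition \ref{prop:subject}.

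For (ii) $\Rightarrow$ (iii), the candidate set is $X = \mathfrak{G}/H'$, discrete because $H'$ is open. The canonical $\mathfrak{G}$-action on $X$ is transitive, and proper since every point stabilizer is a conjugate $g'H'g'^{-1}$, hence compact. The one step that needs real care is faithfulness: since $H'$ is open and $G$ is dense, every $g' \in \mathfrak{G}$ can be written $g' = gh$ with $g \in G$ and $h \in H'$, whence $g'H'g'^{-1} = gH'g^{-1}$. Consequently the $\mathfrak{G}$-core of $H'$ coincides with its $G$-core, which is trivial by hypothesis.

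For (iii) $\Rightarrow$ (i), Corollary \ref{cor:transit} applies directly to $\mathfrak{G} \actson X$ and identifies $(\rho, \mathfrak{G})$ with a transitive t.d.l.c. completion $(\rho_H, G(H))$ for some almost normal subgroup $H \subseteq G$ of trivial core. The $G$-action on $G/H$ is then automatically faithful, transitive, and f.s.o., and its Schlichting completion is precisely the amenable group $\mathfrak{G}$; Proposition \ref{co_sco3} then gives amenability of the action, so $G \in \mathcal{A}_1$. Apart from the faithfulness bookkeeping in (ii) $\Rightarrow$ (iii), where one must pass between $\mathfrak{G}$-conjugates and $G$-conjugates of $H'$ via density, the argument is a direct translation between the equivalent languages of f.s.o. actions, Hecke pairs, and t.d.l.c. completions.
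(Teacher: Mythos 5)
Your proof is correct and uses essentially the same machinery as the paper (the Schlichting completion, Proposition \ref{co_sco3}, Corollary \ref{cor:transit}, and the content of Proposition \ref{prop:eq_trans}), merely traversing the cycle in the order (i)$\Rightarrow$(ii)$\Rightarrow$(iii)$\Rightarrow$(i) rather than the paper's (i)$\Rightarrow$(iii)$\Rightarrow$(ii)$\Rightarrow$(i). The faithfulness argument you spell out for (ii)$\Rightarrow$(iii) -- identifying the $\mathfrak G$-core of $H'$ with its $G$-core via density -- is exactly the point handled by Proposition \ref{prop:eq_trans} and Lemma \ref{lem:straight}(2).
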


\begin{proof} (i)  $\Rightarrow$ (iii) by Theorem \ref{thm:exist_proper} and Proposition \ref{co_sco3};
(iii) $\Rightarrow$ (ii) by Corollary \ref{cor:transit} and Proposition \ref{prop:eq_trans} and
(ii) $\Rightarrow$ (i) by Propositions \ref{prop:eq_trans} and \ref{co_sco3}.
\end{proof}

 Obviously, this class ${\mathcal A}_{1}$ contains the class of countable amenable groups. We will see later that it also contains non-amenable groups. Before, we study some properties of this class of groups.

It is easily checked that a product $G\times H$ is in $\cA_1$ if and only if $G$ and $H$ are in $\cA_1$ (same proof as in \cite[Section 4.A]{GM}).  As we will see in Section \ref{sect:const_ex},  a group in the class $\cA_{1}$ may contain infinite groups with the Kazhdan property (T). So to be in ${\mathcal A}_{1}$ is not hereditary. However, we have the following fact.

\begin{prop}\label{prop:fin_ind} Let $G \in {\mathcal A}_{1}$. Then every subgroup $K$ of finite index in $G$ is still in ${\mathcal A}_{1}$.
\end{prop}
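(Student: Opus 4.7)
My plan is to use the reformulation of $\cA_{1}$ given by Proposition \ref{prop:equiv_alnor}(ii): a countable group belongs to $\cA_{1}$ if and only if it embeds densely in a t.d.l.c. amenable group containing an open compact subgroup of trivial core. Starting from such a witness $G \hookrightarrow \mathfrak G$ with compact open subgroup $H' \leq \mathfrak G$ having trivial $G$-core, my goal is to produce the analogous data for $K$.

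First, I would identify a natural t.d.l.c.~amenable completion of $K$. Writing $G = \bigsqcup_{i=1}^n g_i K$ and taking closures in $\mathfrak G$, the closed set $\bigcup_i g_i \overline K$ contains $\overline G = \mathfrak G$, so $[\mathfrak G : \overline K] \leq n$. A closed subgroup of finite index in a topological group is automatically open, its complement being a finite union of closed cosets; hence $\overline K$ is clopen in $\mathfrak G$, is therefore a t.d.l.c.~group, and is amenable as a closed subgroup of the amenable group $\mathfrak G$. By construction, $K$ is dense in $\overline K$.

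Next I would produce a compact open subgroup $U \leq \overline K$ with trivial $K$-core. The action $\mathfrak G \actson \mathfrak G/H'$ is faithful, since the $\mathfrak G$-core of $H'$ is contained in its $G$-core and is therefore trivial. Restricting, $\overline K$ acts faithfully on $\mathfrak G/H'$. By density of $K$ in $\overline K$ together with discreteness of $\mathfrak G/H'$, the $\overline K$-orbits on $\mathfrak G/H'$ coincide with the $K$-orbits, whose number is bounded by $[G:K]$. Picking representatives $g_1 H',\dots,g_m H'$, set $U_i = \overline K \cap g_i H' g_i^{-1}$ (compact open in $\overline K$) and $U = \bigcap_{i=1}^m U_i$. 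The $\overline K$-core $\bigcap_{k \in \overline K} k U k^{-1}$ equals the kernel of the faithful action $\overline K \actson \bigsqcup_i \overline K/U_i \simeq \mathfrak G/H'$, so it is trivial.

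The final step, converting triviality of the $\overline K$-core into triviality of the $K$-core, is where I expect the most care to be required, but it becomes transparent once one notes that $U$ is \emph{clopen} in $\overline K$: for each fixed $x \in \overline K$, the set $\{k \in \overline K : k^{-1}xk \in U\}$ is the preimage of a clopen set under a continuous map, hence clopen; if it contains the dense subgroup $K$ it must equal all of $\overline K$. Thus the $K$-core of $U$ coincides with its $\overline K$-core and is trivial. Applying Proposition \ref{prop:equiv_alnor} in the (ii)$\Rightarrow$(i) direction to the dense embedding $K \hookrightarrow \overline K$ with compact open subgroup $U$ then yields $K \in \cA_{1}$.
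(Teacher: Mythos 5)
Your argument is correct and follows essentially the same route as the paper: pass to the closure $\overline K$ of $K$ in the amenable completion, observe that it is a closed (indeed finite-index open) amenable subgroup whose induced action is still proper, faithful and almost transitive, and invoke Proposition \ref{prop:equiv_alnor}. The only difference is cosmetic: you re-derive by hand the content of Corollary \ref{cor:transit} and Lemma \ref{lem:straight} (the construction of $U$ and the passage from the $\overline K$-core to the $K$-core via density), where the paper simply quotes the equivalence of conditions (ii) and (iii) in Proposition \ref{prop:equiv_alnor}.
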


\begin{proof} Let $G\actson X$ be a faithful, transitive, amenable f.s.o. action. The action of $K$ on $X$ has finitely many orbits. The closure $\cK'$ of $K$ in $\Map(X)$ is a subgroup of the closure $G'$ of $G$. Since $G'$ is amenable and acts properly on $X$, its closed subgroup $\cK'$ has the same properties. Then we apply  Proposition \ref{prop:equiv_alnor}.
\end{proof}

Note that to the author's knowledge, it is still unknown whether the class $\mathcal A$ is closed under passing to finite  index subgroups.

 We now study some other properties  of subgroups of groups in the class ${\cA}_1$. Let $\cC$ be a class of groups. We say that a group $G$ is {\it residually} $\cC$ if for every $g\not= e$ in $G$, there exists a normal subgroup $N$ of $G$ such that $g\notin N$ and $G/N$ belongs to the class $\cC$. When $\cC$ is the class of finite groups, we say that $G$ is {\it residually finite}. When $\cC$ is the class of finite $p$-groups, we say that $G$ is  {\it residually a finite $p$-group}.

We denote by $\cA_{fin}$ the class of finite groups.

\begin{prop}\label{prop:resid}  Let $G$ be a group in the class $\cA_{1}$ and let $G_1$ be a subgroup of $G$. Then $G_1$ is residually ${\mathcal A}_1\cup \cA_{fin}$. 
\end{prop}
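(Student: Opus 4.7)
The plan is to start from a faithful, transitive, amenable f.s.o.\ action $G\actson G/H$ witnessing $G\in\cA_{1}$, where $H$ is an almost normal subgroup of $G$ with trivial $G$-core, and then, for each non-trivial $g\in G_{1}$, to produce an appropriate normal subgroup $N$ of $G_{1}$ with $g\notin N$ and $G_{1}/N\in\cA_{1}\cup\cA_{fin}$.

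Fix $g\in G_{1}\setminus\{e\}$. Since the $G$-core of $H$ is trivial, I can pick $h_{0}\in G$ with $g\notin K:=h_{0}Hh_{0}^{-1}$. The subgroup $L:=G_{1}\cap K$ is exactly the stabilizer of the point $h_{0}H\in G/H$ under the restriction of the $G$-action to $G_{1}$, and by construction $g\notin L$. Let $N$ be the $G_{1}$-core of $L$, i.e.\ $N=\bigcap_{g_{1}\in G_{1}}g_{1}Lg_{1}^{-1}$, in the sense of Definition \ref{Gsimple}. Then $N\triangleleft G_{1}$, and since $N\subseteq eLe^{-1}=L$ and $g\notin L$, we have $g\notin N$.

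It remains to verify that $G_{1}/N\in\cA_{1}\cup\cA_{fin}$. Consider the action $G_{1}/N\actson G_{1}/L$: it is transitive by construction, faithful because $N$ is precisely the kernel of the action $G_{1}\actson G_{1}/L$, and f.s.o.\ because the f.s.o.\ property passes to the restriction of the $G$-action to the subgroup $G_{1}$ on any of its orbits. By Corollary \ref{cor:Eymard} applied to the amenable f.s.o.\ action $G\actson G/H$, the $G_{1}$-action on the orbit $G_{1}\cdot h_{0}H$, which is $G_{1}$-equivariantly identified with $G_{1}/L$ via Lemma \ref{lem:straight} (3), is amenable; hence so is the induced action of $G_{1}/N$. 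If $G_{1}/L$ is finite then $G_{1}/N$ embeds into a finite symmetric group, so $G_{1}/N\in\cA_{fin}$; if $G_{1}/L$ is infinite then transitivity of the faithful action of $G_{1}/N$ on this infinite set forces $G_{1}/N$ to be an infinite countable group, placing it in $\cA_{1}$.

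No serious obstacle is expected: the argument is essentially a bookkeeping combination of the triviality of the $G$-core of $H$, Corollary \ref{cor:Eymard}, and the defining characterization of $\cA_{1}$. The only point requiring attention is the finite/infinite dichotomy at the end, which is needed because the paper's convention makes $\cA_{1}$ consist solely of infinite countable groups, forcing the finite quotients into $\cA_{fin}$.
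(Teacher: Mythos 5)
Your proof is correct and follows essentially the same route as the paper: the paper decomposes $X$ into all $G_1$-orbits at once and takes the kernels $N_i$ of the restricted actions, invoking Corollary \ref{cor:Eymard} for amenability, whereas you carry out the identical argument one element $g$ at a time by selecting a suitable orbit via the triviality of the $G$-core of $H$. The extra care you take with the faithfulness of $G_1/N\actson G_1/L$ and the finite/infinite dichotomy is exactly what the paper leaves implicit.
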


\begin{proof} Let $G\actson X$ be a faithful, transitive, amenable f.s.o. action. We write $X$ as a union of $G_1$-orbits: $X=\sqcup_i Y_i$.  Denote by $\rho_i$ the canonical surjective homomorphism from $G_1$ onto $(G_1)_{|Y_i}$ and by $N_i$ the kernel of $\rho_i$. Since the action of $G_1$ on $Y_i$ is amenable, each  group $G_1/N_i$ is in ${\mathcal A}_1\cup \cA_{fin}$. Moreover, if $g\in G_1$ is a non-trivial element, there exists some index $i$ such that $g\notin N_i$. \end{proof}

The class $\cA$ is strictly larger than the class $\cA_{1}$. For instance, Glasner and Monod \cite{GM} have shown that for any countable group $G$, the free product $G*H$ is in $\cA$ as soon as $H$ is countable and residually finite ({\it e.g.} $\F_2$). But for instance, if $G$   is non-residually finite with the Kazhdan property, then $G*H$ is not in $\cA_{1}$.

\subsection{Groups acting faithfully, transitively and amenably on graphs}  Recall that this fami\-ly of groups includes all finitely generated groups that belong to $\cA_{1}$. 

 Let $\Gamma$ be a  locally finite  connected graph. We say that $\Gamma$ is {\it transitive} (resp. {\it almost transitive}) if $\Aut(\Gamma)$ acts transitively (resp. with a finite number of orbits) on its set of vertices. Such graphs have either one, two or infinitely many ends (\cite{Ab74}, \cite[Corollary 1]{Mo}, \cite[Theorem 6]{Kro}). 

Another important feature of an almost transitive graph $\Gamma$ is its growth, which is related to its number of ends. Let $x_0\in X$ and, for every integer $n\geq 1$, denote by $a_n$ the number of vertices of $\Gamma$ at distance at most $n$ from $x_0$. The {\it growth} of $\Gamma$ is the rate of growth (which does not depend on the choice of $x_0$) of the sequence $(a_n)$. Then (see \cite[Theorem 9.3]{Cam} for references, and \cite{IS91}):
{\it 
\begin{itemize} 
\item if $\Gamma$ has infinitely many ends its has exponential growth ({\it i.e.} $\lim_n a_n^{1/n} > 1$);
\item $\Gamma$ has linear growth if and only if it has two ends (and these ends are thin);
\item a graph with one end displays all possible rates of growth (polynomial, exponential, intermediate), except linear growth (intermediate meaning that $(a_n)$ grows faster than any polynomial but slower than any sequence $(\lambda^n)$ with $\lambda >1$).
\end{itemize}}

 Let us also mention the following result  contained in \cite{SW}. If $F$ is a finite subset of $X$, we define its boundary $\partial F$ to be the set of all vertices in $X\setminus F$ that have a neighbor in $F$. The {\it isoperimetric number} $i(\Gamma)$ of $\Gamma$ is the infimum of $\abs{\partial F}/\abs{F}$, where $F$ runs over the set of non-empty finite subsets of $X$. Assume that $\Gamma$ is transitive. Then the following properties are equivalent:
\begin{itemize}
\item $i(\Gamma) = 0$;
\item some (equivalently every) closed transitive subgroup of $\Aut(\Gamma)$ is amenable and unimodular.
\end{itemize}
This applies in particular if $\Gamma$ has  subexponential growth ({\it i.e.} $\lim_n a_n^{1/n} = 1$) since we have $i(\Gamma) = 0$ in this case. Observe that one may have $i(\Gamma) = 0$ whereas $\Gamma$ has exponential growth (consider for instance the Cayley graph of a solvable finitely generated group with exponential growth).

 Every countable group acting faithfully and transitively on a graph having subexponential growth (in particular a graph with two ends) is in $\cA_1$. One-ended transitive graphs  are the most difficult to handle. Let us now focus on transitive graphs with infinitely many ends. In this case, subgroups of $\Aut(\Gamma)$ acting transitively and amenably have a nice characterization.
 
 \begin{thm}\label{thm:SW} Let $\Gamma = (X,E)$ be a    locally finite connected graph and let $G$ be a subgroup of $\Aut(\Gamma)$ which acts transitively on $X$. We assume that $\Gamma$ has infinitely many ends.  Let $G'$ be the closure of $G$ in $\Aut(\Gamma)$. The following conditions are equivalent:
 \begin{itemize}
 \item[(i)] $G'$ is amenable;
\item[(ii)] $G'$ (or equivalently $G$) fixes an end (and this fixed end is unique and thin);
 \item[(iii)] the  action $G\actson X$ is amenable.
 \end{itemize}
\end{thm}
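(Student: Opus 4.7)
The equivalence $(i)\Leftrightarrow(iii)$ follows at once from Proposition~\ref{co_sco3}, since the transitive action $G\actson X$ is a fortiori f.s.o. The implication $(ii)\Rightarrow(iii)$ is nothing but the ``$G$ fixes a thin end'' clause of part (iii) of Theorem~\ref{Woess}. Hence the whole content of the statement reduces to $(iii)\Rightarrow(ii)$, and this is where the hypothesis of infinitely many ends will play the decisive role.

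For $(iii)\Rightarrow(ii)$, I would apply part (i) of Theorem~\ref{Woess} to the amenable action $G'\actson X$, which forces one of the three alternatives (a), (b), (c). Case (a) is excluded immediately, since $G$ acts transitively on the infinite set $X$ and so cannot leave a non-empty finite subset invariant. To rule out (c), suppose $G'$ preserves setwise a pair of ends $\{\omega_0,\omega_1\}$ that are the two directions of some hyperbolic $\gamma\in G'$, and pass to the index-at-most-$2$ subgroup $G'_0\subseteq G'$ fixing each of $\omega_0,\omega_1$ individually (still transitive, or with two orbits). A Busemann function $\beta_{\omega_0}\colon X\to\Z$ (defined up to an additive constant) is $G'_0$-equivariant in the sense that $\chi(g):=\beta_{\omega_0}(gx)-\beta_{\omega_0}(x)$ defines a homomorphism $\chi\colon G'_0\to\Z$, which is non-trivial since $\gamma\in G'_0$ translates along its axis. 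The kernel of $\chi$ preserves each horosphere $\beta_{\omega_0}^{-1}(n)$, and $G'_0$ being almost transitive on $X$ forces $\ker\chi$ to be almost transitive on each such horosphere. A direct analysis of this layered structure makes $\Gamma$ quasi-isometric to $\Z$ and therefore two-ended, contradicting the standing hypothesis. So (b) holds and $G'$ fixes some end $\omega$.

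Uniqueness and thinness of $\omega$ are then obtained by variants of the same argument. If $G'$ fixed a second end $\omega'\neq\omega$, the Busemann-function analysis applied to the pair $\{\omega,\omega'\}$ would again force $\Gamma$ to be two-ended. Thinness of $\omega$ follows from the observation that a transitive closed subgroup of $\Aut(\Gamma)$ fixing a thick end can be shown, via a disjoint-rays construction, to contain a discrete free subgroup on two generators (compare Remark~\ref{rem:free}), contradicting the amenability of $G'$. The main obstacle is precisely this horospherical/Busemann analysis used to rule out case (c) and to establish uniqueness and thinness of the fixed end; the required geometric arguments are delicate but follow the tree-of-cuts techniques of Dunwoody--M\"oller--Woess for vertex-transitive graphs with infinitely many ends.
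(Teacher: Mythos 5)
The paper's own proof is a two-line citation: the equivalence (i)$\Leftrightarrow$(ii) is quoted from Soardi--Woess \cite[Proposition 2]{SW}, and (i)$\Leftrightarrow$(iii) is Proposition \ref{co_sco3}. Your treatment of (i)$\Leftrightarrow$(iii) and your elimination of alternative (a) of Theorem \ref{Woess} are correct and coincide with the paper's route. The difference is that where the paper cites \cite{SW}, you attempt to reprove the content of that citation, and it is precisely there that your argument has genuine gaps.

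Concretely: in your elimination of case (c), the Busemann function $\beta_{\omega_0}$ is not well defined for an arbitrary end of an arbitrary locally finite graph --- the limit $\lim_n\bigl(d(x,x_n)-n\bigr)$ along a ray representing $\omega_0$ need not exist, and even with a regularized horofunction the key identity that $\chi(g)=\beta_{\omega_0}(gx)-\beta_{\omega_0}(x)$ is independent of $x$ (so that $\chi$ is a homomorphism to $\Z$) is exactly what fails outside of trees and CAT(0)-like settings. The subsequent steps --- that $\ker\chi$ is almost transitive on each horosphere, that the ``layered structure'' forces $\Gamma$ to be quasi-isometric to $\Z$, and that a closed transitive group fixing a thick end contains a discrete free subgroup --- are each nontrivial assertions stated without proof; the last one in particular is not covered by Remark \ref{rem:free}, which only applies when none of (a), (b), (c) holds, whereas a group fixing a thick end satisfies (b). (The paper even points out, after Theorem \ref{Woess}, that a group may fix a thick end without acting amenably, so thinness is genuinely the delicate point.) These are exactly the statements established in \cite{SW} via the structure theory of infinitely-ended transitive graphs (or, as the paper notes afterwards, via Abels' analysis \cite[Theorem 4]{Ab77} of compactly generated t.d.l.c.\ groups with infinitely many ends). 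As written, your proof of (iii)$\Rightarrow$(ii) therefore does not close; either supply full proofs of these geometric facts or cite \cite[Proposition 2]{SW} as the paper does.
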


\begin{proof} The equivalence between (i) and (ii) is \cite[Proposition 2]{SW}. The equivalence between (i) and (iii) was proved in Proposition \ref{co_sco3}. 
\end{proof}

Let us observe that when the conditions of this theorem are satisfied, the graph  is not very different from a tree.

\begin{thm}\label{thm:qitree} {$($\cite[Theorem 1]{Mo92II}$)$} Let $\Gamma = (X,E)$ be a  locally finite connected   graph with infinitely many ends. Assume that there is an end $\omega$ such that the stabilizer $\Aut(\Gamma)_\omega$ of this end acts transitively on $X$. Then $\Gamma$ is quasi-isometric to a tree.
\end{thm}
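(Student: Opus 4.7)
The strategy is to construct an $\Aut(\Gamma)$-invariant nested family of finite cuts of $\Gamma$, form the associated \emph{structure tree} $T$ in the sense of Dunwoody, and verify that the natural equivariant collapse map $\pi : X \to V(T)$ is a quasi-isometry.

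The first step is to produce the raw material for the cuts. Because $\Gamma$ has infinitely many ends, there is a finite set $C$ of edges whose removal separates $\omega$ from at least one other end, so that $\Gamma \setminus C$ has two infinite components $A$ and $A^{*}$ with $\omega \in A$. Next, using that $\Aut(\Gamma)_\omega$ acts transitively on $X$, one produces a hyperbolic automorphism $\gamma \in \Aut(\Gamma)_\omega$ with $\omega$ as an attracting fixed end: pick a ray $x_0, x_1, \dots$ representing $\omega$ and choose $\gamma \in \Aut(\Gamma)_\omega$ sending $x_0$ to $x_1$; such a $\gamma$ translates along the ray and is hyperbolic with $\gamma^n x_0 \to \omega$. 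Iterating $\gamma$ on $C$ yields an infinite totally nested sequence $\dots , \gamma^{-1} C ,\, C ,\, \gamma C ,\, \dots$ of cuts marching toward $\omega$.

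Next, I would apply Dunwoody's cut-intersection machinery to the full $\Aut(\Gamma)$-orbit of $C$: this produces an $\Aut(\Gamma)$-invariant set $\cE$ of pairwise nested finite cuts of bounded size, each splitting $\Gamma$ into two infinite components. Let $T = T(\cE)$ be the associated structure tree: its vertices are the \emph{lumps}, i.e.\ the maximal subsets of $X$ not separated by any cut in $\cE$, and two lumps are adjacent exactly when a single cut in $\cE$ separates them. The natural map $\pi : X \to V(T)$ sending each vertex to its lump is $\Aut(\Gamma)$-equivariant.

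To conclude, it remains to verify that $\pi$ is a quasi-isometry, which is where the transitivity of $\Aut(\Gamma)_\omega$ is essential. The lumps all lie in a single $\Aut(\Gamma)$-orbit (by transitivity on $X$), so they are mutually isometric; since each lump sits between two consecutive cuts of $\cE$, it has uniformly bounded diameter $D$. This yields coarse surjectivity and the upper bound $d_T(\pi(x),\pi(y)) \le d_\Gamma(x,y)$, because a $\Gamma$-geodesic between $x$ and $y$ must traverse every cut on the unique $T$-geodesic between $\pi(x)$ and $\pi(y)$. Conversely, two adjacent lumps are at $\Gamma$-distance at most $D+1$, giving $d_\Gamma(x,y) \le (D+1)\, d_T(\pi(x),\pi(y)) + D$, the matching lower bound. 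The main obstacle is the construction of the invariant nested family $\cE$: this is essentially the accessibility theorem of Dunwoody--Thomassen--Woess, which requires a delicate analysis of how translates of a single cut intersect. The transitivity hypothesis on $\Aut(\Gamma)_\omega$ is doubly used: it guarantees the existence of a hyperbolic automorphism fixing $\omega$, which arranges nestedness ``in the direction of $\omega$'', and it ensures via its orbit on $X$ that all lumps of the resulting structure tree are finite and of uniformly bounded diameter, which is what upgrades the collapse map into an honest quasi-isometry.
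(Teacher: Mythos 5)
The paper does not actually prove this statement---it is quoted from M\"oller's \emph{Ends of graphs II}---so there is no internal proof to compare against; judged on its own terms, your proposal has a genuine gap at the decisive step. The overall strategy (structure tree of a nested invariant cut system, collapse map) is indeed the right framework, but your justification that the lumps have uniformly bounded diameter --- ``since each lump sits between two consecutive cuts of $\cE$, it has uniformly bounded diameter'' --- is not a valid inference, and it is exactly here that the whole content of the theorem sits. A lump is by definition a maximal set of vertices not separated by any cut of $\cE$; nothing forces such a set to be finite. The cleanest way to see the problem is that your concluding argument uses only vertex-transitivity of $\Aut(\Gamma)$ (to put all lumps in one orbit), never the transitivity of the end-stabilizer $\Aut(\Gamma)_\omega$; but the Cayley graph of $(\Z\times\Z)*(\Z\times\Z)$ is vertex-transitive with infinitely many ends and is \emph{not} quasi-isometric to a tree (it contains quasi-isometrically embedded planes, and its structure-tree lumps are infinite copies of the $\Z^2$-grid, each ``sitting between cuts''). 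So your argument proves a false statement. The hypothesis that the stabilizer of $\omega$ is transitive must be injected precisely to show the blocks are finite --- for instance by showing that every vertex is separated from $\omega$ by a translate of the original cut lying within bounded distance of it, so that no block can contain an end --- and this is the part of M\"oller's proof that your write-up replaces with an assertion.

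A secondary, repairable gap: the element $\gamma\in\Aut(\Gamma)_\omega$ with $\gamma x_0=x_1$ need not be hyperbolic. An automorphism fixing $\omega$ and moving $x_0$ to an adjacent vertex on a ray toward $\omega$ can perfectly well be elliptic of finite order (e.g.\ the reflection of a two-sided ladder swapping its two rails fixes both ends and moves each vertex to a neighbour lying on a zig-zag ray to $\omega$). The existence of a hyperbolic element in a transitive end-fixing group is true but requires an argument (via a Busemann-type homomorphism to $\Z$, or via the induced action on the structure tree), not a one-line choice. Until both points are addressed --- and especially the first --- the proposal does not constitute a proof.
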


\begin{rem}\label{rem:non amen} Let $G$ be an infinite and finitely generated group. The number of ends of $G$ is defined as the number of ends of its Cayley graph with respect to some finite generating set of $G$, and therefore $G$ has either one, two or infinitely many ends. It has two ends if and only if it has an infinite cyclic group of finite index. The case of infinitely many ends is described by the theorem of Stallings (see \cite{Sta68}). From this theorem one deduces that a  group $G$ with infinitely many ends contains a non-abelian free subgroup and therefore is not amenable. 

Faithful, transitive, amenable actions of a group $G$ by automorphisms of a locally finite connected graph with infinitely many ends were considered in \cite{MV}. It is proved in this paper that such actions of $G$ (as a discrete group) are not proper. This gives a direct proof of the non-amenability of a group having infinitely many ends. Another proof of this latter fact is also given in \cite{SW}.
\end{rem}

The notion of Cayley graph can be generalized to compactly generated locally compact groups.

\begin{defn} Let $\mathfrak G$ be a locally compact group. A  locally finite connected graph $\Gamma$ is called a {\it Cayley-Abels} (or {\it rough Cayley}) graph for $\mathfrak G$ if $\mathfrak G$ acts transitively on $\Gamma$ and if the stabilizers of vertices are compact open subgroups of $\mathfrak G$ (see \cite[Definition 2.E.10]{CH}, \cite[Definition 2.1]{KM}).
\end{defn}

A t.d.l.c. group has a Cayley-Abels graph if and only if it is compactly generated \cite[Theorem 2.2]{KM}. Moreover, in this case any two Cayley-graphs are quasi-isometric \cite[Theorem 2.7]{KM}. Then the ends  of $\mathfrak G$ are  well defined as the ends of any of its Cayley-Abels graphs.

Theorem \ref{thm:SW} can also be proved by using Abels' analysis of the structure of compactly generated locally compact groups having infinitely many ends. Indeed, with the notation of this theorem, $\Gamma$ is a Cayley-Abels graph of $G'$. Then, by \cite[Theorem 4]{Ab77}, the group $G'$ is amenable if and only if it fixes one end of $\Gamma$, and in this case $G'$ is a $HNN$-extension $HNN_\alpha(H)$ (see the definition in Section \ref{charac}), where $H$ is a compact group and $\alpha : H\to H$ is an injective, non-surjective, open continuous homomorphism. Moreover, this case occurs. A simple example is  given by $H=\Z_p$, $p$ prime,  $\alpha$ being the multiplication by $p$ (see also \cite[Section 3.9]{Ab77}). Compare with the situation in Remark \ref{rem:non amen}.

Note that the case of a t.d.l.c. compactly generated group $\mathfrak G$ with two ends is also well understood \cite[Theorem 4]{Ab77}: it has a compact open normal subgroup $N$ such that $\mathfrak G/N$ is either isomorphic to $\Z$ or to the infinite dihedral group $\Z_2 *\Z_2$. It is therefore amenable and unimodular, as  seen before in this text. 

As already said, we are mainly interested in the existence of non-amenable groups $G$ in this class ${\mathcal A}_{1}$.  
We concentrate on the case of group actions on trees, which is the easiest to consider.

\subsection{Characterization of the elements of ${\mathcal A}_{T_q}$}\label{charac} For every integer $q\geq 2$, we denote by $T_q$ the regular tree where each vertex has degree $q+1$, and  by $\cA_{T_q}$ the class of countable groups having a faithful, transitive and amenable action on the tree $T_q$. In what follows, we could use Abels' results from \cite{Ab77}, but we prefer to give direct elementary proofs.

\begin{defn}  Let $H$ be a group  and $\alpha$ an injective endomorphism. Then the $HNN$-extension 
$$G = HNN_\alpha(H) = \scal{H,t| t^{-1}ht = \alpha(h), h\in H}$$
is called an {\it ascending $HNN$-extension} (or the {\it mapping torus of} $\alpha$).
\end{defn}

\begin{prop}\label{prop:HNN0} Let  $G = HNN_\alpha(H)$ be an ascending $HNN$-extension as above, such that $[H:\alpha(H)]  = q$ with $q\in \N$, $q\geq 2$. Then $G$ acts transitively and amenably on its Bass-Serre tree $T_{q}$.
This action is faithful if and only if the subgroup $\cap_{n\geq 1} \alpha^n(H)$ has a trivial $H$-core.
\end{prop}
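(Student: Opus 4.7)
The plan is to realise $G = HNN_\alpha(H)$ on its Bass-Serre tree $T_q$ (vertex set $G/H$) and read off the three assertions. Since $[H:\alpha(H)] = q$, each vertex of $T_q$ has degree $q+1$, and transitivity of $G$ on the vertex set is built into the construction.

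For amenability I will exhibit a fixed end. The defining relation $t^{-1}ht = \alpha(h)$ gives $ht = t\alpha(h)$, so by induction $ht^n = t^n\alpha^n(h)$ for every $h\in H$ and $n\ge 0$, whence $h\cdot t^nH = t^nH$. Thus every element of $H$ pointwise stabilises the ray $R = (H, tH, t^2H, \dots)$, while $t$ shifts $R$ onto a cofinal subray. Hence every element of $G = \langle H, t\rangle$ preserves the end $\omega = [R]$. Because $T_q$ is a tree, $\omega$ is automatically thin, and Theorem~\ref{Woess}(iii) yields the amenability of $G\actson G/H$; equivalently, since $T_q$ has infinitely many ends, Theorem~\ref{thm:SW} applies to the closure of $G$ in $\Aut(T_q)$.

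The faithfulness assertion is the main technical point. Since the action is transitive, its kernel is the $G$-core $K = \bigcap_{g\in G}gHg^{-1}$ of $H$, and I aim to identify $K$ with $C = \bigcap_{h\in H} hLh^{-1}$, the $H$-core of $L = \bigcap_{n\ge 1}\alpha^n(H)$; the stated equivalence then follows at once. Setting $H_j = t^j H t^{-j}$ and using the normal form $g = t^a h t^b$ with $a\ge 0$, $h\in H$, $b\in\mathbb{Z}$, one computes $gHg^{-1} = H_{a+b}$ for $b\ge 0$ and $gHg^{-1} = t^a h \alpha^{|b|}(H) h^{-1} t^{-a}$ for $b < 0$. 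The intersection over $b\ge 0$ collapses to $\bigcap_{j\ge 0}H_j = H$, while the intersection over $b < 0$ gives, after pulling out $t^a$, the subgroup $\bigcap_{a\ge 0} t^a C t^{-a}$; specialising to $a = 0$ already shows $K\subset C$. The reverse inclusion $C\subset K$ amounts to proving that $C$ is a normal subgroup of $G$.

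Normality of $C$ in $H$ is built into the definition, so the heart of the argument is the identity $\alpha(C) = C$. The key auxiliary fact is $\alpha(L) = L$: the inclusion $L \subset \alpha^{n+1}(H) = \alpha(\alpha^n(H))$ together with the injectivity of $\alpha|_H$ shows that every $y\in L$ has a unique preimage under $\alpha$ in each $\alpha^n(H)$, hence in $L$. From this, a preimage argument gives $C\subset\alpha(C)$: for $y\in C$, write $y = \alpha(x)$ with $x\in L$; then for every $h\in H$, $\alpha(hxh^{-1}) = \alpha(h)y\alpha(h)^{-1}\in L$, and injectivity of $\alpha|_H$ combined with $\alpha(L) = L$ forces $hxh^{-1}\in L$, so $x\in C$. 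The opposite inclusion $\alpha(C)\subset C$ is the hardest step, where the ascending-HNN structure really enters; the expected route is a case analysis via the coset decomposition $H = \bigsqcup_{i=1}^{q}\alpha(H)h_i$, reducing the verification $h\alpha(x)h^{-1}\in L$ to the $H$-normality of $C$ together with the $\alpha$-stability of $L$. Once $\alpha(C) = C$ is established, $C$ is normalised by both $H$ and $t$, hence is normal in $G$ and contained in $K$, and together with $K\subset C$ this gives $K = C$, completing the faithfulness criterion.
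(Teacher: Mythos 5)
Your treatment of transitivity and amenability is correct and coincides with the paper's: you exhibit the ray $(H,tH,t^2H,\dots)$, fixed pointwise by $H$ and translated by $t$, hence a $G$-fixed thin end, and invoke Theorem \ref{thm:SW}. The problem lies in the faithfulness criterion. Your bookkeeping is accurate as far as it goes: the kernel is $K=\bigcap_{g\in G}gHg^{-1}=\bigcap_{a\geq 0}t^aCt^{-a}$, where $C$ is the $H$-core of $L=\bigcap_{n\geq 1}\alpha^n(H)$; taking $a=0$ gives $K\subseteq C$ and hence the ``if'' direction, and $C\subseteq K$ is indeed equivalent to $\alpha(C)\subseteq C$. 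But at exactly this point you stop proving and start hoping: ``the expected route is a case analysis\dots'' is not an argument, and no case analysis will deliver the inclusion, because $\alpha(C)\subseteq C$ is false in general. (Your correct computation that $C\subseteq\alpha(C)$ should already have been a warning sign: $\alpha(C)$ is the $\alpha(H)$-core of $L$, which a priori is strictly larger than the $H$-core.)

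Here is a concrete obstruction. In $T_2$, fix a line $(x_n)_{n\in\Z}$ with end $\omega=[x_0,x_1,\dots]$, let $t$ be the translation $x_n\mapsto x_{n+1}$, let $\Lambda$ act spherically transitively on the rooted subtree hanging below $x_0$ and trivially elsewhere (as in Section \ref{sect:const_ex}), and let $\sigma$ be the automorphism that swaps the two subtrees hanging below the two children of $y$, where $y$ is the neighbour of $x_1$ other than $x_0$ and $x_2$, and is the identity everywhere else. Set $G=\langle \Lambda,t,\sigma\rangle$. This group acts faithfully and transitively on $T_2$ and fixes $\omega$, so by Proposition \ref{HNN1} it is an ascending HNN-extension with $H=G_{x_0}$ and $q=2$. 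Here $L$ is the pointwise stabiliser of the line and $C$ is the pointwise stabiliser of $\set{x_m : m\geq 1}$ together with the entire subtree below $x_0$; thus $\sigma\in C$, while $K=\set{e}$ since $\sigma$ moves a grandchild of $x_1$. Accordingly $\alpha(\sigma)=t^{-1}\sigma t$ permutes grandchildren of $x_0$, so $\alpha(C)\not\subseteq C$, and the ``only if'' direction of the statement fails as written: the action is faithful although the $H$-core of $L$ is non-trivial. The paper's own one-line justification asserts the same identification of the kernel with the $H$-core of $L$ and is affected by the same problem; what is true, and tautologically so, is that faithfulness is equivalent to the triviality of the $G$-core of $H$. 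So your instinct that this was ``the hardest step'' was sound, but the step is not merely hard --- it is the point where the claimed equivalence breaks.
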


\begin{proof} The Bass-Serre tree of this $HNN$-extension is the regular tree $T_q$. Its set of vertices is $X = G/H$ and its set of edges  is $G/H \sqcup G/K$ where $K = \alpha(H)$. For $g\in G$, the directed edge $gK$ has $gH$ as origin and $gt^{-1}H$ as extremity. The directed edge $gH$ has $gH$ as origin and $gtH$ as extremity. The edge opposite to $gK$ is $gt^{-1}H$. For $n\in \Z$, let us denote by $x_n$ the vertex $t^{n}H$. Then $\set{x_n}_{n\in \Z}$ is a line in $T_q$. We have $t x_n = x_{n+1}$ for $n\in \Z$ and $hx_n = x_n$ for $n\geq 0$ and $h\in H$. It follows that $G$ fixes the end $[x_0, x_1, \dots, x_n,\dots]$. Therefore, $G$ acts amenably on $T_{q}$, by Theorem \ref{thm:SW}.

The last assertion follows from the fact that the intersection of the vertex stabilizers is the $H$-core of $\cap_{n\geq 1} \alpha^n(H)$.
\end{proof}

\begin{prop}\label{HNN1} Every group $G$ in $\cA_{T_q}$, $q\geq 2$, is an ascending $HNN$-extension of the form described in the previous proposition, where the subgroup $\cap_{n\geq 1} \alpha^n(H)$ has a trivial $H$-core.
\end{prop}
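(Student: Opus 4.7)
The plan is to use the unique end of $T_q$ fixed by $G$ (provided by Theorem~\ref{thm:SW}, since $q\geq 2$ means $T_q$ has infinitely many ends) to extract the HNN-extension data. Orienting every edge of $T_q$ towards $\omega$ produces a $G$-equivariant successor map $v\mapsto v^+$; in particular $G_v\subseteq G_{v^+}$ for each vertex $v$. Fix a ray $(x_0,x_1,\ldots)$ to $\omega$, set $H:=G_{x_0}$, and use transitivity to pick $t\in G$ with $tx_0=x_1$. Because $t$ is an isometry preserving the orientation towards $\omega$, it shifts the base ray: $tx_n=x_{n+1}$ for every $n\in\Z$, where we set $x_{-n}:=t^{-n}x_0$ for $n\geq 1$. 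Successor-equivariance then gives $H\subseteq G_{x_n}$ for $n\geq 0$ and $G_{x_{-1}}\subseteq H$.

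Next I would set $\alpha(h):=t^{-1}ht$ and check that $\alpha(H)=t^{-1}Ht=G_{t^{-1}x_0}=G_{x_{-1}}\subseteq H$, so $\alpha$ is an injective endomorphism. To verify $[H:\alpha(H)]=q$, observe that $H$ acts on the $q$ descendants of $x_0$, and transitively so: for any descendant $v$, any $g\in G$ with $gx_{-1}=v$ satisfies $gx_0=g(x_{-1})^+=v^+=x_0$, hence $g\in H$. Since the $H$-stabilizer of $x_{-1}$ is $G_{x_{-1}}=\alpha(H)$, the orbit-stabilizer formula yields the index $q$.

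The universal property of $HNN_\alpha(H)=\langle H,t\mid t^{-1}ht=\alpha(h)\rangle$ then supplies a homomorphism $\phi:HNN_\alpha(H)\to G$ extending $H\hookrightarrow G$ and sending the stable letter to $t$. For surjectivity I would show that $\langle H,t\rangle$ acts transitively on $T_q$: given $v\in T_q$, its ray to $\omega$ meets the base ray at some $x_k$ with $k\geq 0$, at distance $m$ from $v$; an induction on $m$ (iterating the descendant-transitivity argument, since $G_{x_{-j}}\subseteq H$ for every $j\geq 0$) shows that $H$ acts transitively on the set of vertices at distance $m$ below $x_0$, so some $h\in H$ sends $x_{-m}$ to $t^{-k}v$, whence $v=t^{k}ht^{-m}x_0$. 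For injectivity I would appeal to Bass--Serre theory: the $G$-action on $T_q$ has one vertex orbit, admits no edge inversion (a flip of $\{x_0,x_1\}$ would reverse the $\omega$-orientation), and has one edge orbit (since $t$ carries $\{x_{-1},x_0\}$ to $\{x_0,x_1\}$); the structure theorem then identifies $G$ with the HNN-extension of the vertex group $H$ along the edge group $G_{\{x_0,x_1\}}=H$, with the two embeddings into the two endpoint-stabilizers being the identity and $h\mapsto t^{-1}ht=\alpha(h)$, i.e.\ with $HNN_\alpha(H)$.

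Finally, the condition that $\bigcap_{n\geq 1}\alpha^n(H)$ has trivial $H$-core follows at once from the observation at the end of the proof of Proposition~\ref{prop:HNN0}: this core is precisely the intersection of all vertex stabilizers of the $G$-action on $T_q$, and vanishes because that action is assumed faithful. The main obstacle is the HNN-identification of step three; if one wished to bypass Bass--Serre theory, an explicit Britton normal-form argument would be required to verify injectivity of $\phi$ directly, which is a standard but more technical alternative.
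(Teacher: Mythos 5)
Your proof is correct and follows essentially the same route as the paper's: fix the end $\omega$, take $H$ to be a vertex stabilizer and $t$ a step-one translation along a line pointing to $\omega$, prove $[H:\alpha(H)]=q$ by showing that $H$ acts transitively on the $q$ descendants of $x_0$, and deduce the trivial-core condition from faithfulness via Proposition \ref{prop:HNN0}. The only differences are cosmetic: you replace the paper's explicit distance computations (organized around the homomorphism $\theta:G\to\Z$) by the $G$-equivariant successor map towards $\omega$, and you spell out the Bass--Serre identification $G\cong HNN_\alpha(H)$ that the paper simply asserts.
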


\begin{proof} The group $G$ acts faithfully and transitively on  $T_{q}$ and fixes an end that we denote by $ \omega = [y_0, y_1, \dots, y_n,\dots]$. For every $g\in G$ there exists an element $\theta(g)\in \Z$ and an $n_0\in \N$ such that
$gy_n = y_{n+\theta(g)}$ for $n\geq n_0$. Moreover, $\theta$ is a homomorphism from $G$ into $\Z$. Since the action is transitive, there is an element $t\in G$ such that $\theta(t) = 1$. Thus $t$ is a translation of step $1$ along a double infinite path $(x_n)_{n\in \Z}$ with $ \omega = [x_0, x_1, \dots, x_n,\dots]$. For $n\in \Z$ we denote by $G_n$  the stabilizer of $x_n$ and we set $H = G_0$.  Since $G$ fixes $\omega$, the sequence $(G_n)_{n\in \Z}$ of subgroups is increasing. We have of course $G_{n+1} = tG_n t^{-1}$. Let $\alpha$ be the isomorphism $h\mapsto t^{-1}ht$ from $H$ onto its subgroup $G_{-1}$ that we denote by $K$. 

We claim that $[H: K] = q$. Denote by $V$ the set of vertices of $T_q$ at distance $1$ from $x_0$. We have $Hx_{-1} \subset  V$ and $[H: K]$ is the number of elements of $Hx_{-1}$. We first observe that $x_1\notin Hx_{-1}$. Otherwise we would have $hx_{-1} = x_1 = hx_1$  for some $h\in H\subset G_1$. Next, we show that for every $v\in V$, $v\not=x_1$, there exists $h\in H$ such that $hx_{-1} = v$. Since the $G$-action on $T_q$ is transitive there exists $g\in G$ such that $gx_{-1} = v$. Let us observe that $\theta(g) = 0$. Indeed, let $n_0$ be such that $gx_n = x_{n+\theta(g)}$ for $n\geq n_0$. We choose $n$ large enough so that $x_n$ and $x_{n+\theta(g)}$ are both on the half-line $(x_0, x_1, \dots, x_n,\dots)$. We have $d(x_{-1}, x_n) = 1 + d(x_0, x_n)$ and $d(gx_{-1}, gx_n) = 1+ d(x_0, x_{n+\theta(g)})$ and therefore $\theta(g) = 0$. Now we show that $gx_0 = x_0$. If not, $(gx_0, gx_{-1}, x_0, x_1, \dots,)$ would be a half-line and with $n$ as above we would have 
$d(x_0, x_n) = d(gx_0, x_n) = 2 + d(x_0,x_n)$.

It follows that $G = HNN_\alpha(H)$ where $\alpha$ is the endomorphism $h \mapsto t^{-1}h t$, its Bass-Serre tree being $T_q$.  Note that $\cap_{n\geq 1} \alpha^n(H)$ has a trivial $H$-core since $G$ acts faithfully on $T_q$.
\end{proof}

\begin{rem}\label{rem:HNN} We keep the notation of the previous proof. It is easily checked that $\cup_{n\in \Z} G_n$ is the kernel of $\theta$ and so $G$ is the semi-direct product $(\cup_{n\in \Z} G_n) \rtimes \Z$ where $1\in \Z$ acts on $\cup_{n\in \Z} G_n$ as $g\mapsto tgt^{-1}$. 

Note that if $H$ is finitely generated, then $G$ has exponential growth \cite{BH}. More precisely, with respect to any finite generating set, we have $\lim_n a_n^{1/n} \geq 2^{1/4}$.
\end{rem}

\subsection{Construction of elements in $\cA_{T_q}$, $q\geq 2$}\label{sect:const_ex} 
Let $\mathcal{T}_q = (V,E)$ be  the $q$-regular  rooted tree: each vertex of level $n$ has $q$ vertices at level $n+1$ to which it is connected by an edge. We denote by $x_0$ the root of $\cT_q$. Let $\Lambda$ be a subgroup of the group $\Aut(\cT_q)$ of automorphisms of $\mathcal{T}_q$ (thus fixing $x_0$) that acts spherically transitively on $\cT_q$, meaning that the action  is transitive on each level. We consider another copy $\mathcal{T}'_q = (V',E')$ of $\mathcal{T}_q$. We denote by $x_1$ its root. We embed both rooted trees in the regular tree $T_q$ in the following way: the set of vertices of $T_q$ is $X = V\sqcup V'$, we keep the edges of each rooted tree and we add an edge between $x_0$ and $x_1$. We extend the action of $\Lambda$ to an action on $T_q$, so that its restriction to $\mathcal{T}'_q$ is trivial. We choose half-lines $(\dots x_{-n},x_{-n+1}, \dots x_{-1}, x_0)$ and $(x_1,x_2,\dots, x_n, \dots)$ of $\cT_q$ and $\cT'_q$ respectively and we consider the double infinite path of $T_q$,
$$\ell =(\dots, x_{-n},\dots, x_{-1}, x_0,x_1, x_2,\dots, x_n,\dots).$$

Now, let $t\in \Aut(T_q)$ be the translation (represented by the arrows in the figure 1 below, in case $q=2$)
 of step $1$ along the axis $\ell$ that translates the rooted subtree with root  $x_n$ (obtained after deleting the edges $(x_n, x_{n+1})$ and $(x_n, x_{n-1})$) to the corresponding subtree with root $x_{n+1}$.

Let $G$ be the subgroup of $\Aut(T_q)$ generated by $\set{t} \cup \Lambda$. Then $G$ acts amenably on $T_q$ since it fixes the end $\omega = [x_0,x_1,\dots, x_n,\dots]$. Moreover the action is transitive and faithful. If $H$ ($\supset \Lambda$) denotes the stabilizer of $x_0$ then $G = HNN_\alpha(H)$ where $\alpha: h\mapsto t^{-1}ht$.

For instance let $\Lambda$ be a group given with a decreasing sequence $ \Lambda = \Lambda_0 \supset \Lambda_1\supset\cdots \supset\Lambda_n\supset\cdots$ of subgroups such that $[\Lambda_n:\Lambda_{n+1}] = q$ for every $n$ and $\cap_n \Lambda_n = \set{e}$. The corresponding coset rooted tree is constructed as follows. Its root is $x_0 =\Lambda$, and for $n\geq 1$, its vertices at the $n$-th level are the cosets $\Lambda/\Lambda_n$. A vertex $g\Lambda_n$ is connected with a vertex $g'\Lambda_{n+1}$ if and only if $g'\Lambda_{n+1} \subset g\Lambda_n$. This rooted tree is  the $q$-regular  rooted tree $\mathcal{T}_p$. The action of $\Lambda$ on $\mathcal{T}_q$ is faithful, and spherically transitive.

\subsection{Examples of non-amenable groups in $\cA_{T_p}$} Let $p$ be a prime number and $\Lambda$ be resi\-dually a finite $p$-group. Then there is a sequence $ \Lambda \supset \Lambda_1\supset\dots \supset\Lambda_n\supset\dots$ of subgroups such that $[\Lambda_n:\Lambda_{n+1}] = p$ for every $n$ and $\cap_n \Lambda_n = \set{e}$ (see for instance \cite{Sidki}). It follows from the above construction that $\Lambda$ is contained in a group belonging to $\cA_{T_p}$.

Non-amenable residually  finite $p$-groups are plentiful. First, it is well-known that for every prime number $p$ and every integer $k\geq 2$, the free group $\F_k$ is  residually a finite $p$-group (see \cite{Val93} for an elementary proof).
Other examples are given by congruence subgroups. Denote by $\Gamma_n(k)$ the congruence subgroup of ${\rm SL}_n(\Z)$ defined as the kernel of the congruence homomorphism from ${\rm SL}_n(\Z)$ onto ${\rm SL}_n(\Z/k\Z)$. When $n\geq 3$, the group  $\Gamma_n(k)$ is residually $p$-finite if and only if the prime number $p$ divides $k$. For $n=2$, and $k\geq 3$, the group  $\Gamma_2(k)$ is free and therefore residually a finite $p$-group for every $p$, and for $k=2$, the group  $\Gamma_2(2)$ is residually $p$-finite only for $p=2$ (see \cite{Nica}).
In particular, the groups $\Gamma_n(p)$ for $p$ prime and $n\geq 3$ are residually finite $p$-groups and have Property (T).

\vspace{-1cm}

\begin{center}

{\begin{tikzpicture}[line cap=round,line join=round,>=triangle 45,x=0.6cm,y=1.0cm]
\clip(-11.589317506838734,-7.184963557963577) rectangle (19.544768826917874,6.8418043899891785);
\draw [line width=2.pt,color=ffqqqq] (0.,0.)-- (-5.,1.);

\draw (0.,0.)-- (5.,1.);
\draw [line width=2.pt,color=ffqqqq] (0.,-1.)-- (-5.,-2.);
\draw (0.,-1.)-- (5.,-2.);
\draw (7.,3.)-- (5.,1.);
\draw [line width=2.pt,color=ffqqqq] (-5.,1.)-- (-7.,3.);
\draw (-5.646983871941729,3.931881452539514)-- (-7.,3.);
\draw [line width=2.pt,color=ffqqqq] (-8.293050002867338,4.031451963488016)-- (-7.,3.);
\draw (2.9374107794433706,2.95880580482952)-- (5.,1.);
\draw (-4.26,3.98)-- (-3.,3.);
\draw (-1.74,3.98)-- (-3.,3.);
\draw (1.68,4.)-- (2.9374107794433706,2.95880580482952);
\draw (4.214901446007198,4.089699181787656)-- (2.9374107794433706,2.95880580482952);
\draw (5.953126451331749,4.068756711843987)-- (7.,3.);
\draw (8.36,3.9633759306802725)-- (7.,3.);
\draw [line width=2.pt,color=ffqqqq] (-7.,-4.)-- (-5.,-2.);
\draw (-2.884595864896693,-4.015036686412332)-- (-5.,-2.);
\draw (3.,-4.)-- (5.,-2.);
\draw (7.,-4.)-- (5.,-2.);
\draw [line width=2.pt,color=ffqqqq] (-8.21522452842835,-4.998591739334411)-- (-7.,-4.);
\draw (-5.7442657149904655,-4.998591739334411)-- (-7.,-4.);
\draw (-4.22666896343019,-4.998591739334411)-- (-2.884595864896693,-4.015036686412332);
\draw (-1.7557101499923065,-5.018048107944158)-- (-2.884595864896693,-4.015036686412332);
\draw (1.7658925683719215,-5.018048107944158)-- (3.,-4.);
\draw (4.217395013200058,-5.018048107944158)-- (3.,-4.);
\draw (7.,-4.)-- (5.851729976418816,-5.018048107944158);
\draw (7.,-4.)-- (8.264319684027457,-5.037504476553905);
\draw [line width=2.4pt,color=ffqqqq] (-8.74054648089152,-5.932497432602272)-- (-8.21522452842835,-4.998591739334411);
\draw (-7.787184419013912,-5.932497432602272)-- (-8.21522452842835,-4.998591739334411);
\draw (-6.172305824404901,-5.990866538431513)-- (-5.7442657149904655,-4.998591739334411);
\draw (-5.316225605576029,-5.990866538431513)-- (-5.7442657149904655,-4.998591739334411);
\draw (-4.693621810064121,-6.029779275651007)-- (-4.22666896343019,-4.998591739334411);
\draw (-3.720803379576765,-6.049235644260754)-- (-4.22666896343019,-4.998591739334411);
\draw (-2.2810321024554785,-5.990866538431513)-- (-1.7557101499923065,-5.018048107944158);
\draw (-1.2693009347486286,-5.990866538431513)-- (-1.7557101499923065,-5.018048107944158);
\draw (1.2211142472990022,-5.990866538431513)-- (1.7658925683719215,-5.018048107944158);
\draw (2.330127258054588,-6.029779275651007)-- (1.7658925683719215,-5.018048107944158);
\draw (3.692073060736886,-6.029779275651007)-- (4.217395013200058,-5.018048107944158);
\draw (4.684347859833989,-6.029779275651007)-- (4.217395013200058,-5.018048107944158);
\draw (5.423689867004379,-6.01032290704126)-- (5.851729976418816,-5.018048107944158);
\draw (6.415964666101482,-5.990866538431513)-- (5.851729976418816,-5.018048107944158);
\draw (7.816823206003274,-5.990866538431513)-- (8.264319684027457,-5.037504476553905);
\draw (8.750728899271136,-6.01032290704126)-- (8.264319684027457,-5.037504476553905);
\draw [line width=2.pt,color=ffqqqq] (-8.847683897202629,4.9709555386534205)-- (-8.293050002867338,4.031451963488016);
\draw (-7.7747603883941006,4.988837597133563)-- (-8.293050002867338,4.031451963488016);
\draw (-6.183257183661451,4.988837597133563)-- (-5.646983871941729,3.931881452539514);
\draw (-5.181861908773491,5.006719655613705)-- (-5.646983871941729,3.931881452539514);
\draw (-4.591753978928801,4.988837597133563)-- (-4.26,3.98);
\draw (-3.6797689964415516,4.988837597133563)-- (-4.26,3.98);
\draw (-2.231322259550039,5.006719655613705)-- (-1.74,3.98);
\draw (-1.31933727706279,5.006719655613705)-- (-1.74,3.98);
\draw (1.1841509101571086,4.988837597133563)-- (1.68,4.);
\draw (2.292838535925921,5.006719655613705)-- (1.6599201128795436,4.026871771956649);
\draw (3.651874980416723,5.006719655613705)-- (4.214901446007198,4.089699181787656);
\draw (4.706916430745109,5.006719655613705)-- (4.214901446007198,4.089699181787656);
\draw (5.440080828430936,5.006719655613705)-- (5.953126451331749,4.068756711843987);
\draw (6.441476103318896,4.953073480173279)-- (5.953126451331749,4.068756711843987);
\draw (7.746866372369272,4.9709555386534205)-- (8.36,3.9633759306802725);
\draw (8.712497530296947,4.988837597133563)-- (8.36,3.9633759306802725);
\draw (-3.,3.)-- (-5.,1.);
\draw [line width=2.pt,color=ffqqqq] (0.,0.)-- (0.,-1.);
\draw [dotted] (-7.,3.)-- (7.,3.);
\draw [dotted] (-5.,1.)-- (5.,1.);
\draw [dotted] (-8.293050002867338,4.031451963488016)-- (8.36,3.9633759306802725);
\draw [dotted] (-8.847683897202629,4.9709555386534205)-- (8.712497530296947,4.988837597133563);
\draw [dotted] (-5.,-2.)-- (5.,-2.);
\draw [dotted] (-7.,-4.)-- (7.,-4.);
\draw [dotted] (-8.21522452842835,-4.998591739334411)-- (8.264319684027457,-5.037504476553905);
\draw [dotted] (-8.74054648089152,-5.932497432602272)-- (8.750728899271136,-6.01032290704126);
\draw [->] (-9.984093175800584,-5.9208014512862315) -- (-9.48147389715252,-5.1459300633704705);
\draw [->] (-4.75335716991852,-1.6334992005501785) -- (-1.2284047010739094,-0.892458056531713);
\draw [->] (-8.748487449124095,-4.62236831477874) -- (-7.764191361771639,-3.868439396806648);
\draw [->] (-7.198744673292568,-3.4286475279895945) -- (-5.62805942751737,-1.9626746319327488);
\draw [->] (-0.5390392312057323,-0.7480113751999341) -- (-0.5571204461129169,-0.25953631569393976);
\draw [->] (-0.9205975591188162,-0.16866703744246528) -- (-4.668955286992153,0.6264391472579361);
\draw [->] (-5.795599187066725,1.0111561000682818) -- (-7.261572083123576,2.539956405956135);
\draw [->] (-7.659479012053292,2.9378633348858503) -- (-8.497177809800064,3.649907312970604);
\draw [->] (-8.811314858955104,4.0268717719566505) -- (-9.272049197715829,4.759858219985073);
\draw [shift={(-2.734854389132489,4.848166573469868)},line width=0.4pt]  plot[domain=-3.1364380600672526:0.0051545935225405515,variable=\t]({1.*2.031446572075657*cos(\t r)+0.*2.031446572075657*sin(\t r)},{0.*2.031446572075657*cos(\t r)+1.*2.031446572075657*sin(\t r)});
\draw [->,line width=0.4pt] (8.135015260564087,-1.310658210015226) -- (8.12930134524493,1.5305871607160741);
\draw [->,line width=0.4pt] (1.0688824393629077,2.2637014992060793) -- (-1.9212422474879056,2.7493811053928274);
\draw [color=ffqqqq](-9.343123498765864,5.969939211089099) node[anchor=north west] {$\omega$};
\draw [shift={(-5.742990604991872,4.53194857101934)},line width=0.4pt]  plot[domain=3.094009550312811:6.235602203902604,variable=\t]({1.*0.7974276582539525*cos(\t r)+0.*0.7974276582539525*sin(\t r)},{0.*0.7974276582539525*cos(\t r)+1.*0.7974276582539525*sin(\t r)});
\draw [->,line width=0.4pt] (-4.3775190393288685,3.261301419638496) -- (-5.136114353586093,3.77335325676212);
\draw [shift={(5.000615533176064,-6.069420945725321)},line width=0.4pt]  plot[domain=0.020961290086983032:3.1625539436767762,variable=\t]({1.*4.524118418676244*cos(\t r)+0.*4.524118418676244*sin(\t r)},{0.*4.524118418676244*cos(\t r)+1.*4.524118418676244*sin(\t r)});
\draw [shift={(4.9152735603221265,5.0724477324276105)},line width=0.4pt]  plot[domain=3.130723516412523:6.272316170002316,variable=\t]({1.*4.362180724407386*cos(\t r)+0.*4.362180724407386*sin(\t r)},{0.*4.362180724407386*cos(\t r)+1.*4.362180724407386*sin(\t r)});
\draw (-10.346065523630255,-5.918593083644906) node[anchor=north west] {$x_{-4}$};
\draw (-9.489895502404556,-4.597645050896682) node[anchor=north west] {$x_{-3}$};
\draw (-7.924327463591847,-3.4234690217871515) node[anchor=north west] {$x_{-2}$};
\draw (-5.9673674150759615,-1.39312297145192) node[anchor=north west] {$x_{-1}$};
\draw (-0.2677212737734446,-1.0506549629616402) node[anchor=north west] {$x_0$};
\draw (-0.34110727559279036,0.6372230788833112) node[anchor=north west] {$x_1$};
\draw (-5.64936140719213,0.8818430849477968) node[anchor=north west] {$x_2$};
\draw (-8.340181473901472,2.936651135889477) node[anchor=north west] {$x_3$};
\draw (-10.297141522417359,5.28500319410854) node[anchor=north west] {$x_5$};
\draw (-0.6591132834766218,-1.7355909799422) node[anchor=north west] {$\mathcal T_2$};
\draw (-0.6591132834766218,1.7135511055670483) node[anchor=north west] {$\mathcal T_{2}'$};
\draw [dotted] (-8.847683897202629,4.9709555386534205)-- (-9.154213304299802,5.5317412425582155);
\draw (-9.465433501798106,3.939593160753868) node[anchor=north west] {$x_4$};
\draw [->,line width=0.4pt] (-1.1631528059724934,-3.7260639692889086) -- (1.436179378468733,-2.934962869676363);
\draw [shift={(-2.632340562395796,-6.113494073476769)},line width=0.4pt]  plot[domain=0.018290642956820036:3.1598832965466133,variable=\t]({1.*2.317183670771681*cos(\t r)+0.*2.317183670771681*sin(\t r)},{0.*2.317183670771681*cos(\t r)+1.*2.317183670771681*sin(\t r)});
\begin{scriptsize}
\draw [fill=black] (0.,0.) circle (2.5pt);
\draw [fill=black] (-5.,1.) circle (2.5pt);
\draw [fill=black] (5.,1.) circle (2.5pt);
\draw [fill=black] (0.,-1.) circle (2.5pt);
\draw [fill=black] (-5.,-2.) circle (2.5pt);
\draw [fill=black] (5.,-2.) circle (2.5pt);
\draw [fill=black] (-7.,3.) circle (2.5pt);
\draw [fill=black] (7.,3.) circle (2.5pt);
\draw [fill=black] (-5.646983871941729,3.931881452539514) circle (2.5pt);
\draw [fill=black] (-8.293050002867338,4.031451963488016) circle (2.5pt);
\draw [fill=black] (-3.,3.) circle (2.5pt);
\draw [fill=black] (2.9374107794433706,2.95880580482952) circle (2.5pt);
\draw [fill=black] (-4.26,3.98) circle (2.5pt);
\draw [fill=black] (-1.74,3.98) circle (2.5pt);
\draw [fill=qqqqff] (1.68,4.) circle (2.5pt);
\draw [fill=black] (4.214901446007198,4.089699181787656) circle (2.5pt);
\draw [fill=black] (5.953126451331749,4.068756711843987) circle (2.5pt);
\draw [fill=black] (8.36,3.9633759306802725) circle (2.5pt);
\draw [fill=black] (-7.,-4.) circle (2.5pt);
\draw [fill=black] (-2.884595864896693,-4.015036686412332) circle (2.5pt);
\draw [fill=black] (3.,-4.) circle (2.5pt);
\draw [fill=black] (7.,-4.) circle (2.5pt);
\draw [fill=black] (-8.21522452842835,-4.998591739334411) circle (2.5pt);
\draw [fill=black] (-5.7442657149904655,-4.998591739334411) circle (2.5pt);
\draw [fill=black] (-4.22666896343019,-4.998591739334411) circle (2.5pt);
\draw [fill=black] (-1.7557101499923065,-5.018048107944158) circle (2.5pt);
\draw [fill=black] (1.7658925683719215,-5.018048107944158) circle (2.5pt);
\draw [fill=black] (4.217395013200058,-5.018048107944158) circle (2.5pt);
\draw [fill=black] (5.851729976418816,-5.018048107944158) circle (2.5pt);
\draw [fill=black] (8.264319684027457,-5.037504476553905) circle (2.5pt);
\draw [fill=black] (-8.74054648089152,-5.932497432602272) circle (2.5pt);
\draw [fill=black] (-7.787184419013912,-5.932497432602272) circle (2.5pt);
\draw [fill=black] (-6.172305824404901,-5.990866538431513) circle (2.5pt);
\draw [fill=black] (-5.316225605576029,-5.990866538431513) circle (2.5pt);
\draw [fill=black] (-4.693621810064121,-6.029779275651007) circle (2.5pt);
\draw [fill=black] (-3.720803379576765,-6.049235644260754) circle (2.5pt);
\draw [fill=black] (-2.2810321024554785,-5.990866538431513) circle (2.5pt);
\draw [fill=black] (-1.2693009347486286,-5.990866538431513) circle (2.5pt);
\draw [fill=black] (1.2211142472990022,-5.990866538431513) circle (2.5pt);
\draw [fill=black] (2.330127258054588,-6.029779275651007) circle (2.5pt);
\draw [fill=black] (3.692073060736886,-6.029779275651007) circle (2.5pt);
\draw [fill=black] (4.684347859833989,-6.029779275651007) circle (2.5pt);
\draw [fill=black] (5.423689867004379,-6.01032290704126) circle (2.5pt);
\draw [fill=black] (6.415964666101482,-5.990866538431513) circle (2.5pt);
\draw [fill=black] (7.816823206003274,-5.990866538431513) circle (2.5pt);
\draw [fill=black] (8.750728899271136,-6.01032290704126) circle (2.5pt);
\draw [fill=black] (-8.847683897202629,4.9709555386534205) circle (2.5pt);
\draw [fill=black] (-7.7747603883941006,4.988837597133563) circle (2.5pt);
\draw [fill=black] (-6.183257183661451,4.988837597133563) circle (2.5pt);
\draw [fill=black] (-5.181861908773491,5.006719655613705) circle (2.5pt);
\draw [fill=black] (-4.591753978928801,4.988837597133563) circle (2.5pt);
\draw [fill=black] (-3.6797689964415516,4.988837597133563) circle (2.5pt);
\draw [fill=black] (-2.231322259550039,5.006719655613705) circle (2.5pt);
\draw [fill=black] (-1.31933727706279,5.006719655613705) circle (2.5pt);
\draw [fill=black] (1.1841509101571086,4.988837597133563) circle (2.5pt);
\draw [fill=black] (2.292838535925921,5.006719655613705) circle (2.5pt);
\draw [fill=black] (1.6599201128795436,4.026871771956649) circle (2.5pt);
\draw [fill=black] (3.651874980416723,5.006719655613705) circle (2.5pt);
\draw [fill=black] (4.706916430745109,5.006719655613705) circle (2.5pt);
\draw [fill=black] (5.440080828430936,5.006719655613705) circle (2.5pt);
\draw [fill=black] (6.441476103318896,4.953073480173279) circle (2.5pt);
\draw [fill=black] (7.746866372369272,4.9709555386534205) circle (2.5pt);
\draw [fill=black] (8.712497530296947,4.988837597133563) circle (2.5pt);
\draw [fill=black] (-9.984093175800584,-5.9208014512862315) circle (0.5pt);
\draw [fill=black] (-9.48147389715252,-5.1459300633704705) circle (0.5pt);
\draw [fill=black] (-4.75335716991852,-1.6334992005501785) circle (0.5pt);
\draw [fill=black] (-1.2284047010739092,-0.892458056531713) circle (0.5pt);
\draw [fill=black] (-8.748487449124095,-4.62236831477874) circle (0.5pt);
\draw [fill=black] (-7.764191361771639,-3.868439396806648) circle (0.5pt);
\draw [fill=black] (-7.198744673292568,-3.4286475279895945) circle (0.5pt);
\draw [fill=black] (-5.62805942751737,-1.9626746319327488) circle (0.5pt);
\draw [fill=black] (-0.5390392312057323,-0.7480113751999341) circle (0.5pt);
\draw [fill=black] (-0.5571204461129169,-0.25953631569393976) circle (0.5pt);
\draw [fill=black] (-0.9205975591188162,-0.16866703744246528) circle (0.5pt);
\draw [fill=black] (-4.668955286992153,0.6264391472579361) circle (0.5pt);
\draw [fill=black] (-5.795599187066725,1.0111561000682818) circle (0.5pt);
\draw [fill=black] (-7.261572083123576,2.539956405956135) circle (0.5pt);
\draw [fill=black] (-7.659479012053292,2.9378633348858503) circle (0.5pt);
\draw [fill=black] (-8.497177809800064,3.649907312970604) circle (0.5pt);
\draw [fill=black] (-8.811314858955104,4.0268717719566505) circle (0.5pt);
\draw [fill=black] (-9.272049197715829,4.759858219985073) circle (0.5pt);
\draw [fill=black] (-0.7034348045965657,4.858637808441703) circle (0.5pt);
\draw [fill=black] (-4.766273973668412,4.837695338498033) circle (0.5pt);
\draw [fill=black] (8.135015260564087,-1.310658210015226) circle (0.5pt);
\draw [fill=black] (8.12930134524493,1.5305871607160741) circle (0.5pt);
\draw [fill=black] (1.0688824393629077,2.2637014992060793) circle (0.5pt);
\draw [fill=black] (-1.9212422474879056,2.7493811053928274) circle (0.5pt);
\draw [fill=black] (-4.946465525021787,4.49401880530648) circle (0.5pt);
\draw [fill=black] (-6.539515684961957,4.5698783367322005) circle (0.5pt);
\draw [fill=black] (-4.3775190393288685,3.261301419638496) circle (0.5pt);
\draw [fill=black] (-5.136114353586093,3.77335325676212) circle (0.5pt);
\draw [fill=black] (0.4774909719173654,-6.164245360007474) circle (0.5pt);
\draw [fill=black] (9.523740094434762,-5.974596531443169) circle (0.5pt);
\draw [fill=black] (0.5533505033430878,5.119859939568687) circle (0.5pt);
\draw [fill=black] (9.277196617301165,5.025035525286534) circle (0.5pt);
\draw [fill=black] (-9.154213304299802,5.5317412425582155) circle (0.5pt);
\draw [fill=black] (-1.1631528059724934,-3.7260639692889086) circle (0.5pt);
\draw [fill=black] (1.4361793784687331,-2.934962869676363) circle (0.5pt);
\draw [fill=black] (-4.949136639832542,-6.155874489527441) circle (0.5pt);
\draw [fill=black] (-0.3155444849590495,-6.071113657426097) circle (0.5pt);
\end{scriptsize}
\end{tikzpicture}}
\end{center}

\vspace{-0.5cm}
\hspace{6cm}{\bf Figure 1}

  \subsection{Examples of amenable groups in $\cA_{T_q}$} They are the groups of the form  $G = HNN_\alpha(H)$, where $H$ is an amenable group with  $[H:\alpha(H)] = q$ and such that $\cap_{n\geq 1} \alpha^n(H)$ has a trivial $H$-core.

 For instance, take $\alpha : x\mapsto qx$ from $H=\Z$ onto $K=q\Z$. Then $G = HNN_\alpha(H)$ is  the Baumslag-Solitar group $G=BS(1,q) = \scal{a,t| t^{-1}at = a^q}$, $q\geq2$, which acts transitively on its Bass-Serre tree $T_{q} = (X,E)$.  The end $[x_0=H,x_1= tH, \dots,x_k = t^{k}H,\dots]$ is $G$-invariant. This group is isomorphic to the solvable group of affine transformations of $\R$ generated by $a: x\mapsto x+1$ and $t: x\mapsto q^{-1}x$, that is the group of affine transformations of the form $x\mapsto \lambda x + \mu$ with $\lambda \in q^\Z$ and $\mu\in \Z[1/q]= \set{mq^k: m,k\in \Z}$. The stabilizer $G_k$ of $x_k$ is the group of translations by elements of $\Z/q^k$. Therefore $\cup_{k>0} G_k = \Z[1/q]$. 
  
For another example of amenable group, start with the integer Heisenberg group $H =  \Z^3$ with the product $(x,z,y)(x',z',y') = (x+x', z+z' + xy', y+y')$ and take $\alpha: (x,z,y) \mapsto (2x,4z,2y)$.

\vspace{5mm}

\noindent{\bf Aknowledgements.} The author would like to express her thanks to N. Monod for an email exchange leading to the examples given in Section \ref{sect:const_ex}, as well as to Y. de Cornulier, P. de la Harpe and A. Valette for their critical reading of a preliminary version of this paper  and for useful bibliographical informations.

 \bibliographystyle{plain}

\end{document}